\documentclass[11pt]{article}
\textwidth7in \textheight9in \topmargin-10mm 
\evensidemargin-5mm
\oddsidemargin-5mm
\usepackage{color,xcolor}
\usepackage{graphicx,mathrsfs}
\usepackage{amssymb,bbm}
\usepackage{amsthm}
\usepackage{amsmath}
\usepackage[margin=1in]{geometry}
\usepackage{caption,numbysec}
\usepackage{subcaption}
\usepackage{float}
\usepackage{mdframed,pdfsync,mathrsfs} 
\usepackage{esint}
\usepackage{dsfont}
\usepackage{authblk}
\usepackage[normalem]{ulem}

\newtheorem{assumption}{Assumption}
\usepackage[colorlinks=true,pdfpagemode=UseNone,urlcolor=blue,linkcolor=blue,citecolor=blue,breaklinks=true]{hyperref}

%
%

 
\newcommand{\norm}[1]{\lVert#1\rVert}

%
%

\usepackage{enumitem}

\newtheorem{thm}{Theorem}[section]
\newtheorem{lemma}[theorem]{Lemma}

\newtheorem{remark}[theorem]{Remark}

\newtheorem{claim}[theorem]{Claim}

\newcommand{\bea}{\begin{eqnarray*}}
\newcommand{\eea}{\end{eqnarray*}}
\newcommand{\ben}{\begin{eqnarray}}
\newcommand{\een}{\end{eqnarray}}
\newcommand{\beq}{\begin{equation}}
\newcommand{\eeq}{\end{equation}}


\newcommand{\R}{\ensuremath{\mathbb{R}}}

\newcommand{\Rm}{{\mathbb R}}

\newcommand{\E}{\mathbb{E}}

\newcommand{\bal}{\begin{aligned}}
\newcommand{\enbal}{\end{aligned}}
\newcommand{\be}{\begin{equation}}
\newcommand{\ee}{\end{equation}}

\renewcommand{\hat}[1]{\widehat{#1}}


\renewcommand{\d}{\partial}

\newcommand{\frc}{\text{rc}}
\newcommand{\fsc}{\text{sc}}



\makeatletter
\makeatother
\begin{document}
\title{Convergence of two-timescale gradient descent ascent dynamics: finite-dimensional and mean-field perspectives}
\author{Jing An} 
\author{Jianfeng Lu}
\affil{Duke University}
\date{}
\maketitle
\numberbysection

\begin{abstract}
The two-timescale gradient descent-ascent (GDA) is a canonical gradient algorithm designed to find Nash equilibria in min-max games. We analyze the two-timescale GDA by investigating the effects of learning rate ratio on convergence behavior in both finite-dimensional and mean-field settings. In particular, for finite-dimensional quadratic min-max games, we obtain long-time convergence in near quasi-static regimes through the hypocoercivity method. For mean-field GDA dynamics, we investigate convergence under a finite-scale ratio using a mixed synchronous-reflection coupling technique.
\end{abstract}

\section{Introduction}
Multi-objective optimization problems, which aim to optimize multiple objectives simultaneously, have gained significant attention due to their wide range of applications, including economics and finance \cite{tapia2007applications}, engineering design \cite{andersson2000survey}, and control systems \cite{gambier2007multi}. In this paper, we focus on a specific bi-objective problem, which is the classical min-max optimization
\begin{equation}\label{objective}
    \min_{x\in\mathcal{X}} \max_{y\in\mathcal{Y}} K(x,y).
\end{equation}
Here $K$ can be considered as the game objective for min and max players, and $x, y$ are player strategies from the strategy space $\mathcal{X}, \mathcal{Y}$.  

Min-max optimization problems are important as they arise naturally when competing objectives or adversarial dynamics are involved in machine learning tasks. Such tasks include generative adversarial networks \cite{goodfellow2014generative}, multi-agent reinforcement learning \cite{busoniu2008comprehensive}, distance computation in optimal transport \cite{lin2020projection}. The goal of min-max optimization problem (\ref{objective}) is to find the pure Nash equilibria \cite{nash1951non}, or saddle points. However, finding the Nash equilibrium (NE) is noticeably subtle, as it may not even exist.

In terms of learning algorithms to solve min-max optimization problems (\ref{objective}), the most standard first-order method is the gradient descent-ascent (GDA) algorithm. At each iteration, the GDA performs gradient descent over $x$ and gradient ascent over $y$. With equal stepsizes for descent and ascent updates, there has been massive literature establishing both asymptotic and non-asymptotic convergence results of the average iterates \cite{korpelevich1976extragradient, nemirovski2004prox, nedic2009subgradient}. For finite-dimensional min-max problems, under the assumption that  $K$ is convex-concave, GDA can achieve linear convergence \cite{cherukuri2017saddle, adolphs2019local, liang2019interaction}. 
However, in more general settings where $K$ lacks convex-concave structure, GDA with equal stepsizes may converge to limit cycles or even diverge \cite{balduzzi2018mechanics, hommes2012multiple, mertikopoulos2018cycles, daskalakis2018training, bailey2020finite}.

To tackle the difficulties caused by nonconvexity-nonconcavity, the
two-timescale GDA algorithm, which takes different learning rates
for gradient descent and ascent updates, has
become a popular alternative. As \cite{heusel2017gans} shows, the
two-timescale GDA empirically has better convergence
performance. Regarding convergence results of the two-timescale GDA in
finite dimensional Euclidean spaces, we mention that the best
last-iterate convergence results that we are aware of assume $f$ only
satisfies weak convexity-concavity, such as two-sided
Polyak-\L{}ojasiewicz (PL) condition \cite{doan2022convergence,
  yang2020global}, under the assumption that a saddle point exists.

Beyond finite-dimensional min-max problems, there has been significant recent progress on min-max problems in the mean-field regime \cite{hsieh2019finding, domingo2020mean, ma2022provably, lu2023two, wang2022exponentially, kimsymmetric, lascu2024fisher, conger2024coupled, cai2024convergence}. These problems, also known as zero-sum games in the space of probability distributions, can be formulated as
\begin{equation}\label{eq:Kxy}
    \min_{p\in\mathcal{P}(\mathcal{X})} \max_{q\in\mathcal{P}(\mathcal{Y})} \int_{\mathcal{Y}} \int_{\mathcal{X}} K(x,y) p(x) q(y) \, dx dy,
\end{equation}
Compared with the finite dimensional case, the pure strategies $x, y$ are replaced by mixed strategies $p,q$ which are probability distributions over the set of strategies.
The solution $(p^*,q^*)$ to such infinite-dimensional min-max problems is called mixed Nash equilibrium (MNE). Compared to the existence of NE for pure strategies, existence theory of MNE is better established. For example, by Glicksberg's minimax theorem for continuous games \cite{glicksberg1952further}, such a MNE exists if $\mathcal{X}, \mathcal{Y}$ are compact manifolds
without boundary.  However, finding the MNE is in general difficult. Recent works such as \cite{domingo2020mean} suggest an infinite-dimensional entropy-regularized min-max problem, so that a unique Nash equilibrium is guaranteed to exist and has an explicit form. The mean-field counterpart of GDA, known as Wasserstein gradient descent-ascent flows (or mean-field GDA), has become a popular topic of study, particularly regarding its convergence guarantees and rates \cite{wang2024open}. 

In terms of whether the two-timescale mean-field GDA with a fixed finite scale ratio $\eta>0$ converges to the unique MNE \cite{domingo2020mean, wang2024open}, previous established results are unfortunately limited: For compact manifold domains, \cite{ma2022provably} obtained convergence guarantees in the quasi-static regime (where $\eta = +\infty$ or $0$). This was later extended by \cite{lu2023two} through a novel Lyapunov construction under log-Sobolev inequality assumptions, but still the convergence is limited near the quasi-static regime where $\eta\gg 1$ or $\eta\ll 1$. For the whole Euclidean spaces, a recent work \cite{cai2024convergence}  provides exponential convergence guarantees for mean-field GDA with $\eta=1$, assuming that the objective function $K$ is smooth and globally strongly convex-concave.

In this work, we investigate the two-timescale GDA in the continuous-time limit, and conduct convergence analysis from new perspectives in partial differential equations and stochastic analysis. In particular, if we take $\eta=\eta_y/\eta_x$ as the ratio of learning rates, we want to answer the following question:
\begin{center}
    \textit{How does the two-timescale GDA converge depending on the ratio of learning rates $\eta$?}
\end{center}
We will provide quantitative answers for both finite-dimensional and infinite-dimensional cases.
\subsection{Our contributions}
We highlight the major contributions as follows.
\begin{itemize}
    \item In the finite-dimensional setting, we analyze the dynamics of the two-timescale GDA algorithm applied to the classical quadratic game \cite{letcher2019differentiable}. Through rescaling, we identify that the learning dynamics exhibit a hypocoercive structure \cite{villani2009hypocoercivity}. Leveraging this framework, we construct a Lyapunov function to quantitatively estimate the convergence rate with respect to ratio of learning rates $\eta$ (Theorem \ref{thm:hypocoercivity}). For completeness, we further discuss how to design a preconditioner for the two-timescale dynamics to reduce sensitivity to extreme values of $\eta$.
    
    \item In the infinite-dimensional / mean-field setting, we examine the entropy-regularized min-max problem introduced by \cite{domingo2020mean} and study the convergence of the two-timescale mean-field GDA for a finite range of $\eta$ using the mixed synchronous-reflection coupling method \cite{eberle2016reflection}. Our convergence analysis accommodates \textit{locally nonconvex-nonconcave} objective functions $K$ and highlights how feasibility of contraction depends on the choice of the ratio $\eta$, the geometric properties of $K$, and the strength of the entropy regularization (Theorem \ref{thm:convbycoupling}).

\end{itemize}
Moreover, in Appendix \ref{sec:appendixa}, we present an alternative approach using the averaging method to derive the leading term of the convergence rate in the finite-dimensional case when the interaction matrix dominates, a topic previously studied in \cite{wang2024local} through spectral analysis.
\subsection{Related works}
\paragraph{Two-timescale methods.} Including the classical two-timescale stochastic approximation \cite{borkar2008stochastic, konda2004convergence}, there have been many types of two-timescale learning methods applied across a variety of problems such as distributed optimization  \cite{lan2020communication, chang2020distributed}, finding fixed points in reinforcement learning \cite{daskalakis2020independent, dalal2020tale, angiuli2022unified, ding2020natural}, as well as finding global optimality in optimization as we mentioned previously. Separation of timescales often serves to enhance convergence behavior, facilitate empirical simulations \cite{berthier2024learning}, or even enable the discovery of distinct solutions by appropriately tuning learning rates \cite{angiuli2022unified}.

\paragraph{Hypocoercivity.} Originally developed for the analysis of kinetic equations such as the Boltzmann and Fokker-Planck equations, the hypocoercivity method \cite{villani2009hypocoercivity} has become a powerful tool in the study of stochastic and partial differential equations. It provides a framework for estimating energy dissipation by capturing the interplay between degenerate dissipation and mixing effects \cite{dolbeault2015hypocoercivity, grothaus2014hypocoercivity, roussel2018spectral, cao2023explicit}. This variational approach serves as an alternative to conventional methods based on spectral analysis, and we employ it to establish Theorem \ref{thm:hypocoercivity}.

\paragraph{Coupling methods.} Coupling is a powerful method in probability theory through which random variables can be compared with each other. Recently, variants of coupling methods have been developed to estimate contraction rates for Langevin dynamics \cite{eberle2016reflection, eberle2019couplings,schuh2024global} regarding different types of drifts, and later extended to sampling and optimization \cite{durmus2017nonasymptotic, cheng2018sharp, cheng2018underdamped, mou2022improved}.

\paragraph{Organization} 
The paper is organized as follows. In Section \ref{sec:setup}, we introduce the problem setup, notations, and informal statements of the main theorems. Section \ref{sec:finite-dim} investigates the convergence of the two-timescale GDA dynamics for the finite-dimensional quadratic game using a hypocoercivity approach. In Section \ref{sec:precond}, we discuss preconditioned saddle point problems, which help mitigate the effects of extreme values of the ratio $\eta$ and improve convergence. Finally, Section \ref{sec:infinite-dim} presents a detailed convergence analysis of the two-timescale mean-field GDA for a finite range of $\eta$ using the coupling method.

\section{Setup and main results}\label{sec:setup}
This section provides preliminary calculations, definitions, and notations used throughout the paper. Let $\mathcal{P}(\mathcal{X})$ denote the space of probability distributions over a compact manifold $\mathcal{X}$. $\|\cdot\|$ denotes the Euclidean norm for vectors, and $\|\cdot\|_F$ denotes the Frobenius norm of a matrix. We use $\kappa^{-}(r)=\max\{0,-\kappa(r)\}$ to denote the negative part of the function. Sometimes we use the notation $\dot{f}(t) = df/dt$.

\subsection{Finite dimensional quadratic games}
In the first part, we consider solving the quadratic game in the finite dimension.
    \begin{equation}\label{quadratic}
      \min_{x\in\Rm^n} \max_{y\in \Rm^m} K(x,y) = \min_{x\in\Rm^n} \max_{y\in \Rm^m}\Big\{ \frac{1}{2}x^{\top} Q x + x^{\top} P y - \frac{1}{2}y^{\top} R y\Big\}.
    \end{equation}
The quadratic game, which arises from control problems such as linear quadratic regulators (LQR), is a widely-studied subject. If we solve this zero-sum game by two-timescale gradient descent-ascent algorithm in the continuous time, then the dynamics would follow
    \begin{subequations}\label{alg:gda}
        \begin{align}
            \dot{x}(t) &= -\nabla_x K(x,y) = -Qx-Py,\\
            \dot{y}(t) &= \eta\nabla_y K(x,y) = -\eta Ry+\eta P^{\top} x.
        \end{align}
    \end{subequations}
    We may introduce a rescaled function $z(t) = \sqrt{\eta} x(t)$, so that the equations can be rewritten as 
    \begin{subequations}
        \begin{align}
            \dot{z}(t) &= -Qz-\sqrt{\eta}Py,\\
            \dot{y}(t) &= -\eta Ry+\sqrt{\eta} P^{\top} z.
        \end{align}
    \end{subequations}
    Taking $\phi(t) =[z(t), y(t)]^{\top}$, we have that 
    \begin{equation}\label{eqn:time_t}
    \begin{aligned}
        \dot{\phi}(t) = -\begin{bmatrix}
            Q & \sqrt{\eta}P\\
            -\sqrt{\eta}P^{\top} & \eta R
        \end{bmatrix} \phi(t) = -D \phi(t) - \sqrt{\eta}L \phi(t),
\end{aligned}
\end{equation}
with the symmetric matrix $D = \bigl[\begin{smallmatrix} Q & 0\\ 0 & \eta R \end{smallmatrix}\bigr]$ (while strictly speaking we shall use $D_{\eta}$, we suppress the subscript to keep notation simpler) and the skew-symmetric matrix $L = \bigl[\begin{smallmatrix} 0 & P\\ - P^{\top} & 0 \end{smallmatrix}\bigr]$.  
It is clear from linear ODE theory that the flow converges locally to a local saddle point $(x^*, y^*)$ if all eigenvalues of the matrix $D+\sqrt{\eta}L$ have strictly positive real parts:
\begin{equation}
    \mu_\eta: = \min_{\lambda \in \text{Sp}(D+\sqrt{\eta} L)} \text{Re}(\lambda) >0,
\end{equation}
where Sp$(\cdot)$ denotes the spectrum of a matrix. However, it is not obvious how the condition $\mu_\eta>0$ is affected by the two-timescale ratio $\eta$. While it is possible to conduct detailed spectral analysis as in \cite{wang2024local}, we choose to investigate the convergence rate dependence on $\eta$ from the hypocoercivity perspective \cite{villani2009hypocoercivity, dolbeault2015hypocoercivity} by constructing a Lyapunov function based on the interplay of symmetric and skew-symmetric matrices. Compared to spectral analysis, the analysis based on hypocoercivity is relatively simpler.
 
With the rescaled time $s=\sqrt{\eta} t$, we are able to obtain the following convergence result under appropriate assumptions (see Theorem \ref{thm:hypocoercivity} for rigorous statement), with some constants $C, \Lambda>0$,
\begin{equation*}
  \|\phi(s)\|^2 \leq C \exp\Big(-\Lambda \min\Big\{\sqrt{\eta}, \frac{1}{\sqrt{\eta}}\Big\} s \Big)\|\phi_0\|^2.
\end{equation*}
In particular, the convergence result suggests that optimal choice of $\eta$ is of order $1$, so that the timescales of dynamics of two components are comparable, rather than using $\eta \ll 1$ or $\eta \gg 1$ as in the quasi-static regime.

Returning to original variables of dynamics \eqref{alg:gda}, we get
\begin{equation}\label{eq:decaygame}
    \eta \norm{x(t)}^2 + \norm{y(t)}^2 \leq C e^{- \Lambda \min \{1, \eta \} t} \bigl(\eta \norm{x(0)}^2 + \norm{y(0)}^2 \bigr).
\end{equation}
While this seems to suggest choosing $\eta \gg 1$ to accelerate convergence, it is somewhat deceiving since this is only for the continuous-time dynamics, while usual discretization of \eqref{alg:gda} would require step size $O(1/\eta)$ when $\eta \gg 1$ due to the stiffness. The convergence of a preconditioned saddle point algorithm suggested by our analysis would be discussed in more details, see Section~\ref{sec:precond}.

\subsection{Infinite dimensional continuous games}

The infinite-dimensional entropy-regularized min-max problem is of the form
\begin{equation}\label{obj:entropy}
\begin{aligned}
    &\min_{p\in\mathcal{P}(\mathcal{X})} \max_{q\in\mathcal{P}(\mathcal{Y})} E_{\beta} (p, q): =\int_{\mathcal{Y}} \int_{\mathcal{X}} K(x,y) p(x) q(y) ~dx dy + \beta^{-1} H(p)- \beta^{-1} H(q)\\
    &\text{where}\quad H(p):=\int_{\mathcal{X}}\log \Big(\frac{dp}{dx}\Big) ~dp, \quad  H(q):=\int_{\mathcal{Y}}\log \Big(\frac{dq}{dy}\Big) ~dq,
\end{aligned}
\end{equation}
which is obtained from \eqref{eq:Kxy} by adding entropy regularization terms, where
$\beta > 0$ is a regularization parameter.  Thanks to the entropy
regularization, the objective energy functional $E_{\beta}(p, q)$ is
strongly convex in $p$ and strongly concave in $q$, thus by von
Neumann's minimax theorem
 \begin{equation}
   \min_{p\in\mathcal{P}(\mathcal{X})} \max_{q\in\mathcal{P}(\mathcal{Y})} E_{\beta} (p, q) =  \max_{q\in\mathcal{P}(\mathcal{Y})} \min_{p\in\mathcal{P}(\mathcal{X})} E_{\beta} (p, q).
 \end{equation}
 Theorem 4 in \cite{domingo2020mean} establishes that if in addition
 $K$ is continuous on $\mathcal{X}\times \mathcal{Y}$, a unique Nash
 equilibrium $(p^*, q^*)$ in the sense that
\begin{equation*}
    E_\beta(p^*,q)\leq E_\beta(p^*,q^*)\leq E_\beta(p,q^*),\quad \text{for all}~p\in\mathcal{P}(\mathcal{X}),~ q\in \mathcal{P}(\mathcal{Y})
\end{equation*}
exists and is the unique fixed point of 
\begin{equation}
    \begin{aligned}
        p(x) = \frac{1}{Z_p}\exp\Big(-\beta\int_{\mathcal{Y}} K(x,y) q(y) ~dy \Big),
        \quad  q(y) =  \frac{1}{Z_q}\exp\Big(\beta\int_{\mathcal{X}} K(x,y) p(x) ~dx \Big),
    \end{aligned}
\end{equation}
where $Z_p, Z_q$ are normalizing constants to make $p, q$ probability distributions.

 We focus on analyzing the dynamics of the entropy-regularized two-timescale gradient descent-ascent flow under the Wasserstein metric:
 \begin{subequations}\label{eqn:PDE}
     \begin{align}
         &\d_t p_t = \nabla_x \cdot\big(p_t  \int_{\mathcal{Y}}\nabla_x K(x,y) q_t(y) ~dy\big)+ \beta^{-1}\Delta_x p_t\\
         &\d_t q_t = \eta\Big(-\nabla_y \cdot\Big(q_t \int_{\mathcal{X}}\nabla_y K(x,y) p_t(x) ~dx\Big)+ \beta^{-1}\Delta_y q_t\Big),
     \end{align}
 \end{subequations}
which can be viewed as the mean-field limit ($N \to \infty$) of the two-timescale Langevin descent-ascent gradient flow 
\begin{subequations}\label{eq:meanfieldGDA}
    \begin{align}
        & dX_t^i = -\frac{1}{N}\sum_{j=1}^N \nabla_x K(X_t^i, Y_t^j) dt + \sqrt{2\beta^{-1}} d W_t^i\\
        & dY_t^i = \frac{\eta}{N}\sum_{j=1}^N \nabla_y K(X_t^j, Y_t^i) dt + \sqrt{2\eta\beta^{-1}} d B_t^i
    \end{align}
\end{subequations}
via $p_t = \frac{1}{N}\sum_{i=1}^N \delta_{X_t^i}, ~q_t = \frac{1}{N}\sum_{i=1}^N \delta_{Y_t^i}$, by taking the number of strategies $N$ to infinity. 

In the above dynamics, the parameter $\eta$ can be viewed as a ratio between the rate of the two dynamics.  We will adopt a mixed synchronous-reflection coupling approach, inspired by \cite{eberle2016reflection}, to establish a Wasserstein-1 convergence result. Given that the objective function is strongly convex-concave outside a local ball with radius $R$, we have 
 \begin{equation*}
     W_1((p_t,q_t), (p^*, q^*))\leq  Ce^{-c \min\{1, \eta\} t} W_1((p_0,q_0), (p^*, q^*)).
 \end{equation*}
We defer the details of the statement and required assumptions to Theorem \ref{thm:convbycoupling}. Notably, we obtain qualitative the same decay rate as the finite dimensional GDA dynamics \eqref{eq:decaygame}. 

This result, compared to \cite{conger2024coupled, cai2024convergence}, accommodates the scenario where $K$ is locally nonconvex-nonconcave, and extends the range of allowed $\eta$ value compared with \cite{lu2023two} which only addressed near quasi-static regime.



\section{Convergence of quadratic zero-sum games}\label{sec:finite-dim}
In this section, we focus on the two-timescale GDA dynamics (\ref{alg:gda}) for solving the quadratic zero-sum game (\ref{quadratic}). In particular, without using any spectral analysis tools, we want to investigate how the symmetric matrix $D$ together with the skew-symmetric matrix $L$ impact the convergence behavior as $\eta\ll1$ or $\eta\gg1$. By rescaling the equation (\ref{eqn:time_t}) with time $s= \sqrt{\eta} t$, we have
\begin{equation}\label{eqn:time_s}
    \frac{d}{d s} \phi = -\frac{1}{\sqrt{\eta}}D \phi- L\phi.
\end{equation}
The observation that
$D = \bigl[\begin{smallmatrix}
            Q & 0\\
            0 & \eta R
        \end{smallmatrix}\bigr]\sim \bigl[\begin{smallmatrix}
            Q & 0\\
            0 & 0
        \end{smallmatrix}\bigr]$ when $\eta\ll 1$ and $\frac{1}{\eta}D = \bigl[ \begin{smallmatrix}
            Q/\eta & 0\\
            0 &  R
        \end{smallmatrix} \bigr] \sim \bigl[\begin{smallmatrix}
            0 & 0\\
            0 & R
        \end{smallmatrix}\bigr]$ when $\eta\gg 1$ mimics the degenerate diffusion in the kinetic equation, which motivates us to take the hypocoercivity approach as in \cite{dolbeault2015hypocoercivity} for convergence analysis.

We start with a modified norm, which can be viewed as a Lyapunov function 
\begin{equation}
    H(\phi) = \frac{1}{2}\|\phi\|^2-\epsilon\langle M\phi, \phi\rangle
\end{equation}
with small $\epsilon\in (0,1)$ to be chosen, and the matrix $M$ is chosen to be
\begin{equation}\label{operator:M}
    M :=- ( I+(L\Pi)^{\top} L\Pi)^{-1}(L\Pi)^{\top}.
\end{equation}
Here $I \equiv I_{n+m}$ is an identity matrix, and $\Pi$ is a projection matrix onto kernel of a selected symmetric semi-definite matrix and satisfies the  algebraic assumption 
\begin{assumption}\label{assump:ag} The projection operator $\Pi$ satisfies
   \begin{equation}
    \Pi L \Pi = 0.
\end{equation} 
\end{assumption}
We specify what projection matrix $\Pi$ we will use depending on the cases. We write $\Pi_Q$ as projection onto the kernel of $Q$, so it functions as, for any $v\in \Rm^{n+m}$,
\begin{equation}
    \begin{bmatrix}
            Q & 0\\
            0 &  0
        \end{bmatrix}\Pi_Q v = 0.
\end{equation}
On the other hand, we write $\Pi_R$ to be projection onto the kernel of $R$, so that for any $v\in \Rm^{n+m}$,
\begin{equation}
    \begin{bmatrix}
            Q & 0\\
            0 &  R
        \end{bmatrix}\Pi_R v = 0.
\end{equation}
Based on the choice of $\Pi$, we have different lower bound assumptions for the symmetric matrix.
\begin{assumption}[microscopic coercivity]\label{assump:D}
 Given $S = \Bigl[\begin{smallmatrix}
            Q & 0\\
            0 &  R
        \end{smallmatrix}\Bigr]$, let $\lambda_Q$ and $\lambda_R$ be the least eigenvalues of $Q$ and $R$ respectively.  If $\Pi=\Pi_Q$, we have
        
        \begin{equation}
            \langle S\phi, \phi\rangle\geq \lambda_Q \|(I-\Pi_Q)\phi\|^2,
        \end{equation}
 and if $\Pi=\Pi_R$, we have
        
        \begin{equation}
           \langle S\phi, \phi\rangle\geq \lambda_R \|(I-\Pi_R)\phi\|^2.
        \end{equation}
\end{assumption}
Regarding the skew-symmetric matrix, we assume a universal lower bound for either $\Pi=\Pi_Q$ or $\Pi=\Pi_R$.
\begin{assumption}[macroscopic coercivity]\label{assump:L}
Let $\lambda_L$ be the minimum eigenvalue of $L^{\top}L = \Bigl[ \begin{smallmatrix}
            PP^\top & 0\\
            0 & P^\top P
        \end{smallmatrix}\Bigr]$, then
        \begin{equation}
            \|L\Pi\phi\|^2\geq \lambda_L \|\Pi\phi\|^2.
        \end{equation}
\end{assumption}

\begin{lemma}\label{lem:bounds}
    Given the operator $M$ in (\ref{operator:M}) and Assumption \ref{assump:ag}, we have the bounds
    \begin{equation}
         \|M\phi\|\leq \frac{1}{2}\|(I-\Pi)\phi\|,\quad \|LM\phi\|\leq\|(I-\Pi)\phi\|.
    \end{equation}
\end{lemma}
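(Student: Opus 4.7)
The plan is to reduce both bounds to a single energy identity obtained by testing the defining relation of $M$ against its output. Writing $u := M\phi$, the definition \eqref{operator:M} rearranges to
\begin{equation*}
(I+(L\Pi)^{\top}L\Pi)\,u \;=\; -(L\Pi)^{\top}\phi,
\end{equation*}
and taking the inner product with $u$ gives the key identity
\begin{equation*}
\|u\|^{2}+\|L\Pi u\|^{2} \;=\; -\langle (L\Pi)^{\top}\phi,u\rangle \;=\; -\langle \phi, L\Pi u\rangle.
\end{equation*}

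Next I would use Assumption~\ref{assump:ag} to eliminate the $\Pi\phi$ component on the right-hand side. Splitting $\phi=\Pi\phi+(I-\Pi)\phi$, the first piece vanishes because $\langle \Pi\phi, L\Pi u\rangle=\langle \phi,\Pi L\Pi u\rangle=0$, so
\begin{equation*}
\|u\|^{2}+\|L\Pi u\|^{2} \;=\; -\langle (I-\Pi)\phi, L\Pi u\rangle \;\leq\; \|(I-\Pi)\phi\|\,\|L\Pi u\|.
\end{equation*}
Dropping $\|u\|^{2}$ on the left gives $\|L\Pi u\|\leq \|(I-\Pi)\phi\|$, and applying Young's inequality in the form $ab\leq \tfrac{1}{4}a^{2}+b^{2}$ to the right-hand side isolates $\|u\|^{2}\leq \tfrac{1}{4}\|(I-\Pi)\phi\|^{2}$, which is the first claimed bound.

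The last ingredient is to upgrade $\|L\Pi u\|$ into $\|Lu\|$ for the second bound. For this I would observe that $u$ lies in the range of $\Pi$: the operator $T:=(L\Pi)^{\top}L\Pi$ vanishes on $\ker\Pi$, so $\ker\Pi$ and its orthogonal complement $\mathrm{range}(\Pi)$ are both invariant under $(I+T)^{-1}$; since $(L\Pi)^{\top}\phi=\Pi L^{\top}\phi$ already lies in $\mathrm{range}(\Pi)$, so does $u=-(I+T)^{-1}(L\Pi)^{\top}\phi$. Hence $\Pi u=u$ and $L\Pi u=LM\phi$, turning the bound $\|L\Pi u\|\leq \|(I-\Pi)\phi\|$ into the desired $\|LM\phi\|\leq \|(I-\Pi)\phi\|$.

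The only nontrivial step is the orthogonality cancellation via $\Pi L\Pi=0$; everything else is linear algebra and Young's inequality, so I do not expect a serious obstacle. The proof will be short and essentially consists of the display above followed by the range-of-$\Pi$ observation.
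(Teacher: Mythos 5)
Your proof is correct and follows essentially the same route as the paper: test the defining relation $(I+(L\Pi)^{\top}L\Pi)M\phi=-(L\Pi)^{\top}\phi$ against $M\phi$, kill the $\Pi\phi$ contribution via $\Pi L\Pi=0$, and close with Cauchy--Schwarz and Young. Your invariance argument for why $M\phi$ lies in $\mathrm{range}(\Pi)$ (so that $L\Pi M\phi=LM\phi$) is in fact more explicit than the paper's one-line remark that ``both sides project onto the same vector space,'' and is a welcome clarification.
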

\begin{proof}
   For any $\phi$, we denote $f := M\phi$. Then we get
   \begin{equation}\label{eqn:aug261}
       (L\Pi)^{\top} \phi = -( I+(L\Pi)^{\top} L\Pi) f,
   \end{equation}
   which implies that
   \begin{equation}
     f = \Pi L \phi-\Pi L^\top L \Pi f.
   \end{equation}
Because both sides project onto the same vector space, we have $\Pi M = M$.

   Now taking the inner product of (\ref{eqn:aug261}) with $f$, we obtain that
   \begin{equation}\label{eqn:aug262}
  \|f\|^2 + \langle L\Pi f, L\Pi f\rangle =     \langle \phi, L\Pi f\rangle  = -\langle (I-\Pi)\phi, L\Pi f\rangle,
   \end{equation}
   as $\langle \Pi\phi, L\Pi f\rangle = 0$ due to Assumption \ref{assump:ag}. We apply the Young's inequality to the right side of (\ref{eqn:aug262}) to obtain the first bound,
   \begin{equation}
    \|f\|^2 + \|L\Pi f\|^2\leq \frac{1}{4}\|(I-\Pi)\phi\|^2 + \|L\Pi f\|^2.
   \end{equation}
   Moreover, to get the second bound in the result, we apply the Young's inequality to the right side  of (\ref{eqn:aug262}) again (with different constants), and drop $\|f\|^2$ on the left side, to arrive at
   \begin{equation}
       \|L\Pi f\|^2\leq\frac{1}{2}\|(I-\Pi)\phi\|^2 + \frac{1}{2}\|L\Pi f\|^2.
   \end{equation}
   Combining with $L\Pi f = L\Pi M\phi = LM\phi$, we then obtain the claim of the lemma.
\end{proof}
Furthermore, we also need some upper bound assumptions. 
\begin{assumption}\label{assump:MAupper}
 Given $S = \Bigl[\begin{smallmatrix}
            Q & 0\\
            0 &  R
        \end{smallmatrix}\Bigr]$, if $\Pi=\Pi_Q$, there exists $\Lambda_Q>0$ such that      
        \begin{equation}
           \langle M S\phi, \phi\rangle\leq \Lambda_Q\|(I-\Pi_Q)\phi\|\|\phi\|
        \end{equation}
 and if $\Pi=\Pi_R$, then there exists $\Lambda_R>0$ such that
        \begin{equation}
          \langle M S\phi, \phi\rangle\leq  \Lambda_R\|(I-\Pi_R)\phi\|\|\phi\|.
        \end{equation}
        Moreover, we can find a constant $C_M>0$ such that 
     \begin{equation}
              \langle M L(I-\Pi)\phi, \phi\rangle\leq C_M \|(I-\Pi)\phi\|\|\phi\|.
     \end{equation}
\end{assumption}
By Lemma \ref{lem:bounds}, we may choose small $\epsilon\in (0,1)$ to have norm equivalence between  $H(\phi)$ and $\|\phi\|^2$:
\begin{equation}\label{eqn:normequiv}
   \frac{1-\epsilon}{2}\|\phi\|^2\leq  H(\phi)\leq \frac{1+\epsilon}{2}\|\phi\|^2.
\end{equation}

With all assumptions stated, we are ready to state and prove the main result in this section. 
\begin{thm}\label{thm:hypocoercivity}
With Assumptions \ref{assump:ag}, \ref{assump:D}, \ref{assump:L}, and \ref{assump:MAupper}, we can find explicit constants $\Lambda, C>0$, depending on $\epsilon$,  to get that for all $s \geq 0$,
\begin{equation}
    \|\phi(s)\|^2 \leq C \exp\Big(-\Lambda \min\Big\{\sqrt{\eta}, \frac{1}{\sqrt{\eta}}\Big\} s \Big)\|\phi_0\|^2.
\end{equation}
\end{thm}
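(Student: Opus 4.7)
The plan is to establish a differential inequality $\tfrac{dH}{ds} \leq -\Lambda \min\{\sqrt\eta, 1/\sqrt\eta\}\|\phi\|^2$ and then combine it with the norm equivalence \eqref{eqn:normequiv} and Grönwall to conclude. First I differentiate $\tfrac{1}{2}\|\phi\|^2$ along the rescaled flow \eqref{eqn:time_s}, using skew-symmetry $\langle L\phi,\phi\rangle = 0$ to obtain
\begin{equation*}
\frac{d}{ds}\tfrac{1}{2}\|\phi\|^2 = -\tfrac{1}{\sqrt\eta}\langle D\phi,\phi\rangle.
\end{equation*}
Exploiting the block structure $D = \operatorname{diag}(Q,\eta R)$ together with Assumption \ref{assump:D}, I take $\Pi = \Pi_Q$ for $\eta \leq 1$ (which gives a microscopic dissipation coefficient $\lambda_Q/\sqrt\eta \geq \sqrt\eta\,\lambda_Q$ on $(I-\Pi_Q)\phi$) and $\Pi = \Pi_R$ for $\eta \geq 1$ (which gives a coefficient of order $\min\{\lambda_Q/\sqrt\eta,\sqrt\eta\,\lambda_R\}$ on $(I-\Pi_R)\phi$). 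In either regime the microscopic rate is at least a constant times $\min\{\sqrt\eta,1/\sqrt\eta\}$.

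Next I differentiate the correction $-\epsilon\langle M\phi,\phi\rangle$. Writing the generator $T = -\tfrac{1}{\sqrt\eta}D - L$ and using $D = D^\top$, $L^\top = -L$, I get
\begin{equation*}
-\epsilon\frac{d}{ds}\langle M\phi,\phi\rangle = \tfrac{\epsilon}{\sqrt\eta}\langle(MD+DM)\phi,\phi\rangle + \epsilon\langle(ML - LM)\phi,\phi\rangle.
\end{equation*}
The crucial negative contribution comes from the $ML\Pi$ piece of the commutator: by Assumption \ref{assump:ag} and the defining identity $(I + \Pi L^\top L\Pi)M\phi = \Pi L\phi$ established in the proof of Lemma \ref{lem:bounds}, one has $ML\Pi\phi = -(I + \Pi L^\top L\Pi)^{-1}(\Pi L^\top L\Pi)\Pi\phi$; since Assumption \ref{assump:L} is equivalent to $\Pi L^\top L\Pi \geq \lambda_L$ on $\operatorname{range}(\Pi)$, I obtain
\begin{equation*}
\epsilon\langle ML\Pi\phi,\phi\rangle \leq -\epsilon\,\tfrac{\lambda_L}{1+\lambda_L}\|\Pi\phi\|^2,
\end{equation*}
which is precisely the desired macroscopic dissipation.

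Every remaining term---namely $\tfrac{\epsilon}{\sqrt\eta}\langle(MD+DM)\phi,\phi\rangle$; the positive piece $-\epsilon\langle LM\phi,\phi\rangle = \epsilon(\|M\phi\|^2 + \|LM\phi\|^2)$; and the residual $\epsilon\langle ML(I-\Pi)\phi,\phi\rangle$---is bounded by $C\|(I-\Pi)\phi\|$ times either itself or $\|\phi\|$, via Lemma \ref{lem:bounds} and Assumption \ref{assump:MAupper}. Splitting $\|\phi\|^2 = \|\Pi\phi\|^2 + \|(I-\Pi)\phi\|^2$ and applying Young's inequality, I absorb each such cross term partly into the microscopic dissipation from the first step and partly into a small fraction of the macroscopic dissipation. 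Choosing $\epsilon$ proportional to $\min\{\sqrt\eta,1/\sqrt\eta\}$, small enough that all net coefficients remain strictly positive, yields $\tfrac{dH}{ds} \leq -\Lambda\min\{\sqrt\eta,1/\sqrt\eta\}\|\phi\|^2$; the norm equivalence \eqref{eqn:normequiv} and Grönwall's inequality then conclude.

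The main obstacle is this uniform $\eta$-tuning. The anticommutator $\tfrac{\epsilon}{\sqrt\eta}\langle(MD+DM)\phi,\phi\rangle$ carries its own $\eta$-prefactor, and when controlled via Young's inequality it forces $\epsilon$ to shrink in the extreme regimes $\eta \to 0$ or $\eta \to \infty$. This shrinkage in turn caps the macroscopic rate $\epsilon\,\lambda_L/(1+\lambda_L)$ at the claimed $\min\{\sqrt\eta,1/\sqrt\eta\}$ scale rather than a uniform $O(1)$ bound. Matching the case split between $\eta \leq 1$ and $\eta \geq 1$ to the two projections $\Pi_Q,\Pi_R$ is what ensures that the microscopic dissipation on each half of the block diagonal scales favorably enough to accommodate this absorption.
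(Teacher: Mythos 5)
Your proposal follows essentially the same route as the paper: the same hypocoercive Lyapunov function $H(\phi)=\tfrac12\|\phi\|^2-\epsilon\langle M\phi,\phi\rangle$, the same case split $\Pi=\Pi_Q$ for $\eta\le1$ and $\Pi=\Pi_R$ for $\eta\ge1$, the same identification of the macroscopic dissipation from $\langle ML\Pi\phi,\phi\rangle$ via Assumption~\ref{assump:L}, and the same tuning $\epsilon\sim\min\{\sqrt\eta,1/\sqrt\eta\}$ to keep the resulting quadratic form negative-definite. Your description of the microscopic rate on $(I-\Pi_R)\phi$ as $\min\{\lambda_Q/\sqrt\eta,\sqrt\eta\,\lambda_R\}$ is a slight mislabeling (the dissipation on $(I-\Pi_R)\phi$ alone is $\sqrt\eta\,\lambda_R$, so the microscopic rate stays $O(1)$ in both regimes and the bottleneck is entirely the macroscopic rate $\propto\epsilon$), but this does not affect the argument or the conclusion.
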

\begin{proof}
\textbf{Case 1: $\eta\ll 1$.} In this case, we consider 
$\tilde D  := \bigl[\begin{smallmatrix}
            Q & 0\\
            0 & 0
        \end{smallmatrix}\bigr]$
as an approximation to the diffusion matrix $D= \bigl[\begin{smallmatrix}
            Q & 0\\
            0 & \eta R
        \end{smallmatrix}\bigr]$,
       We can find a constant $C_R>0$ such that
        \begin{equation}\label{eqn:err1}
            \|\tilde D- D\|_F\leq C_R \eta.
        \end{equation}
With $\tilde D$ in mind, we choose $\Pi = \Pi_Q$ satisfying Assumption \ref{assump:ag} that projects onto kernel of  $\tilde D$, and write $H(\phi) = H_Q(\phi)$ accordingly. Taking the derivative of $H(\phi)$ in time $s$, by the dynamics (\ref{eqn:time_s}), we have
\begin{equation}
    \begin{aligned}
        \frac{d}{ds} H_Q(\phi) &= \langle \phi, \phi_s\rangle-\epsilon\langle M\phi_s, \phi\rangle-\epsilon\langle M\phi, \phi_s\rangle\\
        &= -\frac{1}{\sqrt{\eta}}\langle \phi, D\phi\rangle + \frac{\epsilon}{\sqrt{\eta}}\langle MD\phi, \phi\rangle+\epsilon\langle M L\phi, \phi\rangle\\
        &\quad +\frac{\epsilon}{\sqrt{\eta}}\langle M\phi, D\phi\rangle+\epsilon\langle M\phi, L \phi\rangle.
    \end{aligned}
\end{equation}
For $\langle \phi, D\phi\rangle$, we utilize Assumption \ref{assump:D} and (\ref{eqn:err1}) to obtain that
\begin{equation}
\begin{aligned}
    -\frac{1}{\sqrt{\eta}}\langle   D \phi, \phi\rangle& \leq  -\frac{1}{\sqrt{\eta}}\langle  \tilde D \phi, \phi\rangle +\frac{1}{\sqrt{\eta}}\big|\langle (D - \tilde D)\phi, \phi\rangle \big|\\
    &\leq -\frac{\lambda_Q}{\sqrt{\eta}}\|(I-\Pi_Q)\phi\|^2+C_R \sqrt{\eta} \|\phi\|^2.
\end{aligned}
\end{equation}
For $\langle MD\phi, \phi\rangle$, by the property that $\Pi_Q M= M$ and Assumption \ref{assump:ag}, we observe that
\begin{equation}
   \langle M \tilde{D}\phi, \phi \rangle = \langle M\phi, \tilde{D}\phi\rangle = \langle \Pi_Q M\phi, \tilde{D}\phi\rangle=\langle \tilde{D} \Pi_Q M\phi, \phi\rangle = 0.
\end{equation}
Thus
\begin{equation}
    \frac{\epsilon}{\sqrt{\eta}}\langle MD\phi, \phi\rangle\leq \frac{\epsilon}{\sqrt{\eta}}\big|\langle M(D-\tilde{D})\phi, \phi\rangle\big|\leq \epsilon C_R\sqrt{\eta}\|\phi\|^2.
\end{equation}
As for $\langle M L\phi, \phi\rangle$, we may split
\begin{equation}
    \begin{aligned}
        \langle M L\phi, \phi\rangle = \langle M L\Pi\phi, \phi\rangle+ \langle M L(I-\Pi)\phi, \phi\rangle,
    \end{aligned}
\end{equation}
and treat the first part by using Assumption \ref{assump:L}:
\begin{equation}
    \langle M L\Pi\phi, \phi\rangle = -\langle( I+(L\Pi)^{\top} L\Pi)^{-1}(L\Pi)^{\top}(L\Pi)\phi, \phi\rangle \leq -\frac{\lambda_L}{1+\lambda_L} \|\Pi\phi\|^2 = -\frac{\lambda_L}{1+\lambda_L} \|\Pi_Q\phi\|^2.
\end{equation}
As for the second part, we have the bound  by  Assumption \ref{assump:MAupper},
\begin{equation}
     \langle M L(I-\Pi_Q)\phi, \phi\rangle\leq C_M \|(I-\Pi_Q)\phi\|\|\phi\|.
\end{equation}
For $\langle M\phi, D\phi\rangle$, we then use bounds in Lemma \ref{lem:bounds} and (\ref{eqn:err1}) to get that
\begin{equation}
\begin{aligned}
   \frac{\epsilon}{\sqrt{\eta}} \langle M\phi, D\phi\rangle&=\frac{\epsilon}{\sqrt{\eta}} \langle M\phi, \tilde{D}\phi\rangle+\frac{\epsilon}{\sqrt{\eta}} \langle M\phi, (D-\tilde{D})\phi\rangle\\
   &\leq \frac{\epsilon}{\sqrt{\eta}}\|M\phi\|\Big(\|\tilde{D}\phi\|+\|(D-\tilde{D})\phi\|\Big) \\
   &\leq \frac{\epsilon\Lambda_Q}{2\sqrt{\eta}}\|(I-\Pi_Q)\phi\|\|\phi\| + \frac{\epsilon C_R\sqrt{\eta}}{2} \|(I-\Pi_Q)\phi\| \|\phi\|.
\end{aligned}
\end{equation}
Lastly,  for $\langle M\phi, L \phi\rangle$, by Lemma \ref{lem:bounds}, we obtain that
\begin{equation}
    \langle M\phi, L \phi\rangle = -\langle LM\phi,  \phi\rangle\leq  \|(I-\Pi_Q)\phi\|\|\phi\|.
\end{equation}
Combining all bounds above together, we have that
\begin{equation}
\begin{aligned}
    \frac{d}{ds} H_Q(\phi) &\leq -\frac{\lambda_Q}{\sqrt{\eta}}\|(I-\Pi_Q)\phi\|^2-\frac{\epsilon\lambda_L}{1+\lambda_L} \|\Pi_Q\phi\|^2\\
    &\quad + \epsilon\Big(\frac{\Lambda_Q}{2\sqrt{\eta}}+\frac{C_R\sqrt{\eta}}{2}+1+C_M\Big)\|(I-\Pi_Q)\phi\|\|\phi\| + (1+\epsilon) C_R \sqrt{\eta} \|\phi\|^2\\
    & = -\begin{bmatrix}
            \|(I-\Pi_Q)\phi\| \\
            \|\Pi_Q\phi\|
        \end{bmatrix}^\top \begin{bmatrix}
            S_{++} & S_{+-}/2\\
          S_{+-}/2 & S_{--}
        \end{bmatrix} \begin{bmatrix}
            \|(I-\Pi_Q)\phi\| \\
            \|\Pi_Q\phi\| 
        \end{bmatrix}+(1+\epsilon)C_R \sqrt{\eta}\|\phi\|^2,
\end{aligned}
\end{equation}
with 
\begin{equation}
    S_{++} = \frac{\lambda_Q}{\sqrt{\eta}}, \quad S_{--} = \frac{\epsilon\lambda_L}{1+\lambda_L} , \quad S_{+-} = - \epsilon\Big(\frac{\Lambda_Q}{2\sqrt{\eta}}+\frac{C_R\sqrt{\eta}}{2}+1+C_M\Big)
\end{equation}
and the smallest eigenvalue of the matrix $\bigl[\begin{smallmatrix}
            S_{++} & S_{+-}/2\\
          S_{+-}/2 & S_{--}
        \end{smallmatrix} \bigr]$ is
\begin{equation}
    \lambda_{\min} = \frac{1}{2}\Big(S_{++} + S_{--}-\sqrt{(S_{++}-S_{--})^2+(S_{+-})^2}\Big).
\end{equation}
Note that we consider the regime where $\eta\ll1$. In order to make $\lambda_{\min}>0$,  we take $\epsilon \sim \sqrt{\eta}$ so that for sufficiently small $\eta$, we can find a constant $c>0$ to get
\begin{equation}
     \frac{d}{ds} H_Q(\phi) \leq -c\sqrt{\eta} \|\phi\|^2\leq -\frac{2c\sqrt{\eta}}{1+\epsilon} H_Q(\phi)
\end{equation}
by the norm equivalence (\ref{eqn:normequiv}), which also implies that for $\eta\ll 1$,
\begin{equation}
    H_Q(\phi(s))\leq \exp\Big(\frac{-2c\sqrt{\eta}s}{1+\epsilon}\Big) H_Q(\phi_0) \quad \Longrightarrow \quad \|\phi(s)\|^2 \leq \frac{1+\epsilon}{1-\epsilon}\exp\Big(\frac{-2c\sqrt{\eta}s}{1+\epsilon}\Big) \|\phi_0\|^2.
\end{equation}
\textbf{Case 2: $\eta\gg 1$.}
We first rewrite the dynamics (\ref{eqn:time_s}) as
\begin{equation}
     \phi_s = -\sqrt{\eta }B \phi- L\phi
\end{equation}
with the re-scaled diffusion matrix  
$B= \frac{1}{\eta} D=  \bigl[ \begin{smallmatrix}
            Q/\eta & 0\\
            0 & R
 \end{smallmatrix}\bigr]$. We take 
$\tilde B  := \bigl[\begin{smallmatrix}
            0 & 0\\
            0 & R
        \end{smallmatrix}\bigr]$ to compare with $B$, and one can find a constant $C_Q>0$ such that 
        \begin{equation}
            \|\tilde B- B\|_F\leq C_Q \eta^{-1}.
        \end{equation}
With $\tilde B$ in mind, we choose $\Pi = \Pi_R$ satisfying Assumption \ref{assump:ag} that projects onto kernel of  $\tilde B$, and write $H(\phi) = H_R(\phi)$ accordingly. Taking derivative of $H(\phi)$ in time $s$, we then get
\begin{equation}
    \begin{aligned}
        \frac{d}{ds} H_R(\phi) &= \langle \phi, \phi_s\rangle-\epsilon\langle M\phi_s, \phi\rangle-\epsilon\langle M\phi, \phi_s\rangle\\
        &= -\sqrt{\eta}\langle \phi, B\phi\rangle + \epsilon\sqrt{\eta}\langle M B\phi, \phi\rangle+\epsilon\langle M L\phi, \phi\rangle\\
        &\quad +\epsilon\sqrt{\eta}\langle M\phi, B\phi\rangle+\epsilon\langle M\phi, L \phi\rangle.
    \end{aligned}
\end{equation}
The estimates of each term are similar to what we have done in Case 1. Omitting those details, we will eventually get the bound that
\begin{equation}
\begin{aligned}
    \frac{d}{ds} H_R(\phi) &\leq -\lambda_R\sqrt{\eta}\|(I-\Pi_R)\phi\|^2-\frac{\epsilon\lambda_L}{1+\lambda_L} \|\Pi_R\phi\|^2\\
    &\quad+\epsilon\Big(\frac{\Lambda_R \sqrt{\eta}}{2}+\frac{C_R}{2\sqrt{\eta}}+1+C_M\Big)\|(I-\Pi_R)\phi\|\|\phi\|
  + \frac{(1+\epsilon)C_Q}{\sqrt{\eta}} \|\phi\|^2\\
    & = -\begin{bmatrix}
            \|(I-\Pi_Q)\phi\| \\
            \|\Pi_Q\phi\|
        \end{bmatrix}^\top \begin{bmatrix}
            S_{++} & S_{+-}/2\\
          S_{+-}/2 & S_{--}
        \end{bmatrix} \begin{bmatrix}
            \|(I-\Pi_Q)\phi\| \\
            \|\Pi_Q\phi\| 
        \end{bmatrix}+ \frac{(1+\epsilon) C_Q}{\sqrt{\eta}} \|\phi\|^2,
\end{aligned}
\end{equation}
with \begin{equation}
    S_{++} = \lambda_R\sqrt{\eta}, \quad S_{--} = \frac{\epsilon\lambda_L}{1+\lambda_L} , \quad S_{+-} = - \epsilon\Big(\frac{\Lambda_R \sqrt{\eta}}{2}+\frac{C_R}{2\sqrt{\eta}}+1+C_M\Big)
\end{equation}
and the smallest eigenvalue of this matrix is
\begin{equation}
    \lambda_{\min} = \frac{1}{2}\Big(S_{++} + S_{--}-\sqrt{(S_{++}-S_{--})^2+(S_{+-})^2}\Big).
\end{equation}
Since we consider the regime where $\eta\gg 1$, in order to make $\lambda_{\min}>0$ we take $\epsilon \sim 1/\sqrt{\eta}$ so that there exists a constant $c'>0$,
\begin{equation}
     \frac{d}{ds} H_R(\phi) \leq -\frac{c'}{\sqrt{\eta}} \|\phi\|^2\leq -\frac{2c'}{(1+\epsilon)\sqrt{\eta}} H_R(\phi).
\end{equation}
Accompanied with the norm equivalence (\ref{eqn:normequiv}), we conclude that for $\eta\gg1$, 
\begin{equation}
    H_R(\phi(s))\leq \exp\Big(\frac{-2c's}{(1+\epsilon)\sqrt{\eta}}\Big) H_R(\phi_0) \quad \Longrightarrow \quad \|\phi(s)\|^2 \leq \frac{1+\epsilon}{1-\epsilon}\exp\Big(\frac{-2c's}{(1+\epsilon)\sqrt{\eta}}\Big) \|\phi_0\|^2.
\end{equation}
\end{proof}
In the Figure \ref{fig:psi},  we numerically verify Theorem \ref{thm:hypocoercivity} by  demonstrating that the optimal choice of
$\eta$ for minimizing the least eigenvalue is of order $1$, away from the quasi-static regimes.
\begin{figure}[h!]
\centering
\includegraphics[width=.6\textwidth, height=.4\textwidth]{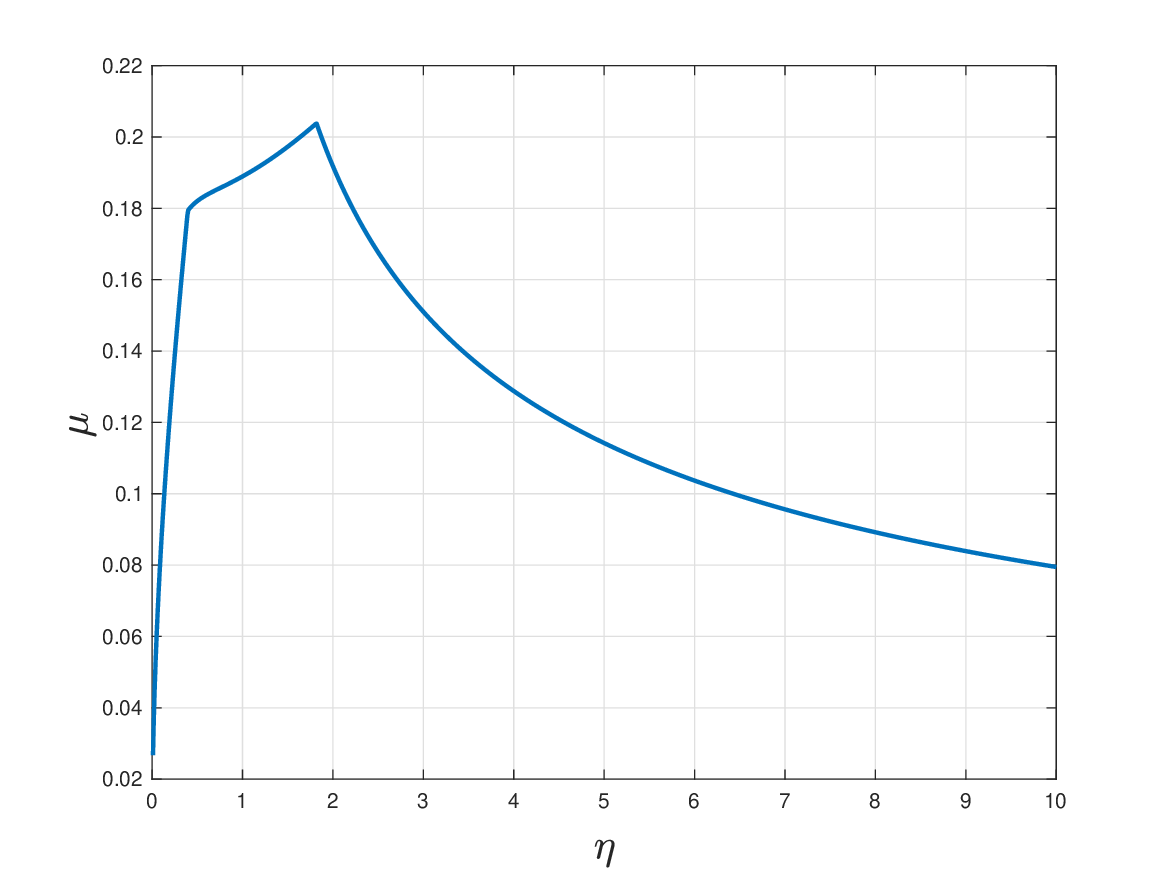}
 \caption{An illustration of how the least eigenvalue of $\frac{1}{\sqrt{\eta}} D+L$ depends on $\eta$ (chosen to be $\eta = 0.01:0.01:10$). We randomly generate $10\times10$ symmetric semi-definite matrices $Q, R$ to create $D=\bigl[\begin{smallmatrix}
            Q & 0\\
            0 & \eta R
        \end{smallmatrix}\bigr]$ and $10\times 10$ matrix $P$ to create $L=\bigl[\begin{smallmatrix}
            0 & P\\
            - P^{\top} & 0
        \end{smallmatrix}\bigr]$. The kink points in the plot are caused by eigenvalue crossings. }
 \label{fig:psi}
\end{figure}

\section{Preconditioned saddle point problem}\label{sec:precond}

Theorem~\ref{thm:hypocoercivity} establishes the convergence behavior of continuous-time dynamics \eqref{eqn:time_t}, while in practice, we would need a numerical discretization to \eqref{eqn:time_t} of the form 
\begin{equation}\label{eqn:numer}
    \phi_{k+1}-\phi_k = -\rho\Big(D +\sqrt{\eta} L\Big)\phi_k,
\end{equation}
where $\rho$ is the step size. The condition number of the matrix $D+\sqrt{\eta}L$ can significantly limit the choice of step size $\rho>0$ for convergence, especially in the case  where $\eta\gg1$ or $\eta\ll 1$. The condition number, defined as the ratio of the maximal to minimal singular values of the matrix, is at least of the order
\begin{equation}
   \kappa\Big(D +\sqrt{\eta}L\Big)= \kappa\Big(\frac{1}{\sqrt{\eta}}D +L\Big)\geq O(\max\{\eta, 1/\eta\}).
\end{equation}
This is because $\sigma_{\max}\Big(\frac{1}{\sqrt{\eta}}D +L\Big) = \Big\|\frac{1}{\sqrt{\eta}}D +L\Big\|_2\geq \max\{\xi^\top\Big(\frac{1}{\sqrt{\eta}}D +L\Big)\xi, \xi\in \R^{m+n}\}= O(\max\{\sqrt{\eta},1/\sqrt{\eta}\})$, and  $\sigma_{\min}\Big(\frac{1}{\sqrt{\eta}}D +L\Big)=O(\min\{\sqrt{\eta},1/\sqrt{\eta}\})$, as inferred from Theorem \ref{thm:hypocoercivity}. Thus, In practice, it is crucial to include a preconditioner for the matrix $D +\sqrt{\eta}L$ to mitigate the impact of extreme values of $\eta$. Specifically, we aim to find a preconditioner $\mathcal{P}$ such that the numerical updates, with $\xi$ being the transformed vector $\phi$, follow as
\begin{equation}\label{eqn:target}
    \xi_{k+1}-\xi_k = -\rho\mathcal{P}\Big(D +\sqrt{\eta} L\Big)\xi_k,
\end{equation}
so that the choice of $\rho$ does not depend on $\eta$.

Suppose $Q$ is symmetric positive definite, ensuring that it is invertible. We then consider the following equation via block Gaussian elimination:
\begin{equation}
\begin{bmatrix}
        I & 0\\
        \sqrt{\eta} P^\top Q^{-1} & I
    \end{bmatrix}
    \begin{bmatrix}
        Q & \sqrt{\eta} P\\
        -\sqrt{\eta} P^\top & \eta R
    \end{bmatrix}
    \begin{bmatrix}
        I & -\sqrt{\eta} Q^{-1}P\\
        0& I
    \end{bmatrix} = \begin{bmatrix}
        Q & 0\\
        0 & \eta( R+P^\top Q^{-1} P)
    \end{bmatrix}.
\end{equation}
The condition number on the right-hand side will not depend on $\eta$ if we further multiply it by $S=\begin{bmatrix}
        I & 0\\
        0 & I/\eta 
    \end{bmatrix}$.
We define $M = \begin{bmatrix}
        I & 0\\
        \sqrt{\eta} P^\top Q^{-1} & I
    \end{bmatrix}, N = \begin{bmatrix}
        I & -\sqrt{\eta} Q^{-1}P\\
        0& I
    \end{bmatrix}$, and introduce $\xi_k = N^{-1}\phi_k$. Thus the numerical update (\ref{eqn:numer}) with a left preconditioner $SM$ becomes
    \begin{equation}
        N\xi_{k+1} - N\xi_k = -\rho S M (D+\sqrt{\eta}L) N \xi_k = -\rho \begin{bmatrix}
        Q & 0\\
        0 &  R+P^\top Q^{-1} P
    \end{bmatrix}\xi_k, \quad 
    \end{equation}
Therefore, we take $\mathcal{P} = N^{-1}S M$, so that (\ref{eqn:target}) is
\begin{equation}
    \xi_{k+1} - \xi_k = -\rho N^{-1}\begin{bmatrix}
        Q & 0\\
        0 &  R+P^\top Q^{-1} P
    \end{bmatrix}\xi_k, \quad N^{-1} = \begin{bmatrix}
        I & \sqrt{\eta} Q^{-1}P\\
        0& I
    \end{bmatrix}.
\end{equation}
From the above, we conclude that by taking the preconditioner $\mathcal{P} = N^{-1}SM$, the condition number 
\begin{equation}
    \kappa \Big(N^{-1}\begin{bmatrix}
        Q & 0\\
        0 &  R+P^\top Q^{-1} P 
    \end{bmatrix}\Big)\leq  \kappa\Big(\begin{bmatrix}
        Q & 0\\
        0 &  R+P^\top Q^{-1} P 
    \end{bmatrix}\Big)
\end{equation}
does not depend on $\eta$.

Before moving toward convergence results, we note that $\phi = [\sqrt{\eta} x, y]^\top$, and $\xi = N^{-1}\phi = [\sqrt{\eta}(x+Q^{-1}Py), y]^\top $. Throughout the remaining content, we denote $T=\begin{bmatrix}
        Q & 0\\
        0 &  R+P^\top Q^{-1} P 
    \end{bmatrix} $.

\begin{claim}
    All eigenvalues of $N^{-1}T$ are real and nonnegative.
\end{claim}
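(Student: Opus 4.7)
The plan is to exploit the block upper triangular structure of $N^{-1}T$. Multiplying out directly,
\begin{equation*}
    N^{-1}T = \begin{bmatrix} I & \sqrt{\eta}\,Q^{-1}P \\ 0 & I \end{bmatrix} \begin{bmatrix} Q & 0 \\ 0 & R+P^\top Q^{-1} P \end{bmatrix} = \begin{bmatrix} Q & \sqrt{\eta}\,Q^{-1}P\,(R+P^\top Q^{-1} P) \\ 0 & R+P^\top Q^{-1} P \end{bmatrix},
\end{equation*}
so the matrix is block triangular with diagonal blocks $Q$ and $R+P^\top Q^{-1} P$.

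For a block triangular matrix, the spectrum equals the union of the spectra of the diagonal blocks, so it suffices to analyze these two blocks individually. The first block $Q$ is symmetric positive definite by the standing assumption in this section, hence has real, strictly positive eigenvalues. For the second block, I would observe that since $Q^{-1}$ is symmetric positive definite, the matrix $P^\top Q^{-1} P$ is symmetric positive semi-definite (via $\langle P^\top Q^{-1} P \, v, v\rangle = \langle Q^{-1}(Pv), Pv\rangle \geq 0$), and summed with the symmetric positive semi-definite $R$ it remains symmetric positive semi-definite, so it has real, nonnegative eigenvalues.

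Combining these two observations gives that all eigenvalues of $N^{-1}T$ are real and nonnegative, as claimed. There is no real obstacle here; the only mild point to be explicit about is that $R$ is assumed symmetric positive semi-definite (consistent with the quadratic game setup in Section \ref{sec:finite-dim}), so that the Schur complement type block $R+P^\top Q^{-1}P$ inherits symmetry and semi-definiteness. No spectral or perturbation machinery is needed beyond the block triangular eigenvalue identity.
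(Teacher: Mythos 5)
Your proof is correct and takes essentially the same route as the paper: both exploit the block upper triangular form of $N^{-1}T$ and reduce to the diagonal blocks $Q$ and $R+P^\top Q^{-1}P$, which are symmetric positive (semi-)definite. The only cosmetic difference is that you invoke the general fact that the spectrum of a block triangular matrix is the union of the diagonal-block spectra, whereas the paper rederives this in place by splitting the eigenvector equation into the $v\neq 0$ and $v=0$ cases.
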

\begin{proof}
    Note that $N^{-1}T = \begin{bmatrix}
        Q & \sqrt{\eta} Q^{-1}P(R+P^\top Q^{-1} P)\\
        0 &  R+P^\top Q^{-1} P 
    \end{bmatrix}$. We consider its eigenvalue $\lambda$ with associated eigenvector $[u;v]^\top$, which gives equations
    \begin{align}
        Qu + \sqrt{\eta} Q^{-1}P(R+P^\top Q^{-1} P)v &= \lambda u,\\
        (R+P^\top Q^{-1} P)v &= \lambda v.
    \end{align}
    The second equation, since $R+P^\top Q^{-1} P$ is symmetric, implies that if $v\neq 0 $, then $\lambda$ must be real and nonnegative. Otherwise, if $\lambda$ is complex, then $v=0$. The second case cannot be true because if $v=0$, we have $Qu = \lambda u$, which implies either $u=0$, or $\lambda$ is real and nonnegative. Therefore, only the first case exists for nontrivial eigenvectors.
\end{proof}
\begin{lemma}
    By choosing $\rho  = \frac{\lambda_{\min}(N^{-1}T+(N^{-1}T)^\top)}{2\sigma_{\max}^2(N^{-1}T)}$, the iterates of (\ref{eqn:target}) shrink as 
    \begin{equation}
        \|\xi_{k+1}\|\leq \sqrt{1- \frac{\lambda_{\min}^2(N^{-1}T+(N^{-1}T)^\top)}{4\sigma_{\max}^2(N^{-1}T)}}\|\xi_k\|.
    \end{equation}
\end{lemma}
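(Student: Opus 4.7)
The plan is to treat the iteration as $\xi_{k+1}=(I-\rho A)\xi_k$ with $A:=N^{-1}T$, and then bound the operator norm $\|I-\rho A\|_2$ by expanding $\|\xi_{k+1}\|^2$ and optimizing in $\rho$. Writing $\mu:=\lambda_{\min}(A+A^\top)$ and $\sigma:=\sigma_{\max}(A)$, I would start from
\begin{equation*}
  \|\xi_{k+1}\|^2=\|\xi_k\|^2-2\rho\langle\xi_k,A\xi_k\rangle+\rho^2\|A\xi_k\|^2
\end{equation*}
and use the identity $\langle\xi_k,A\xi_k\rangle=\tfrac12\langle\xi_k,(A+A^\top)\xi_k\rangle\ge\tfrac{\mu}{2}\|\xi_k\|^2$ together with $\|A\xi_k\|\le\sigma\|\xi_k\|$ to obtain
\begin{equation*}
  \|\xi_{k+1}\|^2\le\bigl(1-\rho\mu+\rho^2\sigma^2\bigr)\|\xi_k\|^2.
\end{equation*}

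The right-hand side is a convex quadratic in $\rho$; differentiating and setting to zero gives the minimizer $\rho^\star=\mu/(2\sigma^2)$, which is exactly the stepsize prescribed in the statement. Substituting $\rho^\star$ back yields
\begin{equation*}
  1-\rho^\star\mu+(\rho^\star)^2\sigma^2=1-\frac{\mu^2}{2\sigma^2}+\frac{\mu^2}{4\sigma^2}=1-\frac{\mu^2}{4\sigma^2},
\end{equation*}
and taking square roots produces the claimed contraction factor.

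There are two minor things to verify in passing. First, the contraction factor must be nonnegative, i.e.\ $\mu^2\le 4\sigma^2$; this follows from the elementary bound $\mu=\lambda_{\min}(A+A^\top)\le 2\sigma_{\max}(A)=2\sigma$, since for any unit vector $v$, $\langle v,(A+A^\top)v\rangle\le 2\|Av\|\le 2\sigma$. Second, the whole statement is meaningful only when $\mu>0$, i.e.\ the symmetric part of $N^{-1}T$ is positive definite; this should be taken as the implicit running assumption (having nonnegative real eigenvalues, as in the previous claim, is not by itself sufficient, so I would either cite it as a hypothesis or verify it directly from the block structure of $N^{-1}T$ under the spectral assumptions on $Q$, $R$, $P$).

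The main obstacle, such as it is, is really bookkeeping rather than analysis: the argument is the standard gradient-descent contraction lemma applied to a non-symmetric but positive-definite-symmetric-part operator, and the only subtlety is the sign condition $\mu>0$ which is not automatic from the real-spectrum fact proved in the preceding claim. Everything else is a one-line Young-type expansion followed by optimization in $\rho$.
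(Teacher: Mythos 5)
Your proof is correct and follows essentially the same route as the paper: expand $\|\xi_{k+1}\|^2 = \|(I-\rho N^{-1}T)\xi_k\|^2$, bound the cross term below via $\lambda_{\min}(N^{-1}T+(N^{-1}T)^\top)$ and the quadratic term above via $\sigma_{\max}^2(N^{-1}T)$, then plug in the stated $\rho$ (which you additionally recognize as the minimizer of the resulting quadratic in $\rho$). Your side remark that $\mu := \lambda_{\min}(N^{-1}T + (N^{-1}T)^\top) > 0$ must hold for $\rho$ to be a valid (nonnegative) step size is a legitimate point --- it is needed both here and, implicitly, in the paper's proof, and is not supplied by the preceding claim that $N^{-1}T$ has real nonnegative eigenvalues, since a non-normal matrix can have nonnegative spectrum while its symmetric part is indefinite.
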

\begin{proof}
From (\ref{eqn:target}), we have
\begin{equation}
    \begin{aligned}
          \|\xi_{k+1}\|^2 &= \|(I-\rho \mathcal{P}(D+\sqrt{\eta}L))\xi_{k}\|^2 = \|(I-\rho N^{-1}T)\xi_{k}\|^2 \\
          & = \|\xi_k\|^2-\rho \langle\xi_k,N^{-1}T \xi_k\rangle- \rho \langle N^{-1}T\xi_k, \xi_k\rangle+ \rho^2 \|N^{-1}T\xi_k\|^2\\
          &\leq (1+\rho^2\sigma_{\max}^2(N^{-1}T))\|\xi_k\|^2 - \rho\lambda_{\min}(N^{-1}T+(N^{-1}T)^\top)\|\xi_k\|^2\\
          &= \Big(1- \frac{\lambda_{\min}^2(N^{-1}T+(N^{-1}T)^\top)}{4\sigma_{\max}^2(N^{-1}T)}\Big)\|\xi_k\|^2,
    \end{aligned}
\end{equation}
and the last line is achieved by taking $\rho = \frac{\lambda_{\min}(N^{-1}T+(N^{-1}T)^\top)}{2\sigma_{\max}^2(N^{-1}T)}$. 
\end{proof}
\begin{remark}
    We observe that 
\begin{equation}
    \frac{\lambda_{\min}(N^{-1}T+(N^{-1}T)^\top)}{2\sigma_{\max}(N^{-1}T)}\leq \frac{\sigma_{\min}(N^{-1}T)}{\sigma_{\max}(N^{-1}T)} = \frac{1}{\kappa(N^{-1}T)},
\end{equation}
Thus, for the iteration $\|\xi_{k+1}\|\leq c \|\xi_k\|$, the convergence rate $c = \sqrt{1- \frac{\lambda_{\min}^2(N^{-1}T+(N^{-1}T)^\top)}{4\sigma_{\max}^2(N^{-1}T)}}\geq \sqrt{1-1/\kappa^2(N^{-1}T)}$. Therefore, with the preconditioner $\mathcal{P}$, the condition number of the resulting matrix $N^{-1}T$  does not depend on the learning rate ratio $\eta$, and allows for a faster convergence rate when $\eta$ is extreme.
\end{remark}

Lastly, for completeness, we examine the convergence rate of the preconditioned continuous dynamics. With the properly chosen preconditioner $\mathcal{P} = N^{-1}SM$, the convergence rate of the dynamics $\dot{\xi}(t) = -N^{-1}T \xi(t)$ is also improved for extreme $\eta$, compared with Theorem~\ref{thm:hypocoercivity}. 
\begin{lemma}
   Suppose that both $Q$ and $R$ are symmetric positive definite with least eigenvalues of order $O(1)$, then for all choices of $\eta$, there exists a constant $C>0$ such that $\lambda_{\min}(N^{-1}T)\geq C\max\{1,\sqrt{\eta}\}$. Therefore, for the continuous-time equation $\dot{\xi}(t) = -N^{-1}T \xi(t)$, $$\|\xi(t)\|\leq \|\xi_0\|e^{-C\max\{1,\sqrt{\eta}\}t}.$$ 
\end{lemma}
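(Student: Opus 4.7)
The proof proceeds in two stages: first extract the spectrum of $N^{-1}T$, then turn the eigenvalue bound into an exponential decay estimate for the flow. By direct multiplication,
\[
N^{-1}T = \begin{bmatrix} Q & \sqrt{\eta}\,Q^{-1}P(R+P^\top Q^{-1}P) \\ 0 & R+P^\top Q^{-1}P \end{bmatrix},
\]
which is block upper triangular, so $\mathrm{Sp}(N^{-1}T) = \mathrm{Sp}(Q) \cup \mathrm{Sp}(R + P^\top Q^{-1}P)$. Both diagonal blocks are SPD ($Q$ by assumption, and $R + P^\top Q^{-1}P$ as the sum of an SPD and a PSD matrix), and Weyl's inequality gives $\lambda_{\min}(R+P^\top Q^{-1}P) \ge \lambda_{\min}(R)$. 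Combined with $\lambda_{\min}(Q)$ and $\lambda_{\min}(R)$ being $O(1)$, this produces the desired spectral lower bound on $N^{-1}T$.

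For the exponential decay of $\|\xi(t)\|$, I would split $\xi=(u,v)^\top$ in block form. The $v$-equation $\dot v = -(R+P^\top Q^{-1}P)\,v$ decouples from $u$, giving $\|v(t)\| \le e^{-\lambda_{\min}(R+P^\top Q^{-1}P)\,t}\|v(0)\|$ for free. Treating $v(t)$ as a forcing in
\[
\dot u = -Qu - \sqrt{\eta}\,Q^{-1}P(R+P^\top Q^{-1}P)\,v,
\]
I would then either apply Duhamel's formula directly, or introduce a block-weighted Lyapunov functional $V(\xi) = \alpha\|u\|^2 + \|v\|^2$ with $\alpha$ chosen as a function of $\eta$. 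Expanding $\dot V$ and splitting the $\sqrt{\eta}$ cross term via Young's inequality reduces contraction to a $2\times 2$ algebraic condition on the pair $(\|u\|,\|v\|)$ analogous to the one appearing in the proof of Theorem~\ref{thm:hypocoercivity}, from which the constant $C$ and the weight $\alpha$ can be read off simultaneously.

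The main obstacle is matching the exact rate $\max\{1,\sqrt{\eta}\}$ claimed in the statement, as opposed to merely a constant rate, given that $N^{-1}T$ is non-normal with off-diagonal coupling of size $\sqrt{\eta}$. My plan would be to handle the two regimes separately: for $\eta \le 1$ the coupling is mild and a plain energy estimate on $\|\xi\|^2$ together with the block-triangular spectral bound already yields contraction at a constant rate; for $\eta \ge 1$ I would tune $\alpha$ in $V$ to absorb the growing cross term, then invoke norm-equivalence between $V$ and $\|\xi\|^2$ to fold the weight-induced prefactor into the exponent as $\sqrt{\eta}$. Once the decay of $V$ is in hand in each regime, reverting to $\|\xi\|^2$ via norm equivalence and combining both cases produces the uniform $e^{-C\max\{1,\sqrt{\eta}\}t}$ rate.
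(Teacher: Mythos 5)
Your block-triangular identification of the spectrum is correct and actually sharper than the paper's argument: since $N^{-1}T$ is block upper triangular, $\mathrm{Sp}(N^{-1}T) = \mathrm{Sp}(Q)\cup\mathrm{Sp}(R+P^\top Q^{-1}P)$, and crucially \emph{neither block contains $\eta$}. This means $\lambda_{\min}(N^{-1}T)$ is a constant independent of $\eta$, so your spectral step can at best yield $\lambda_{\min}(N^{-1}T)\geq C$, never $\lambda_{\min}(N^{-1}T)\geq C\sqrt{\eta}$. You half-acknowledge this as ``the main obstacle,'' but the tension is more fundamental than a bookkeeping issue: your own observation shows the eigenvalues cannot grow with $\eta$, yet you assert it ``produces the desired spectral lower bound.'' You need to reconcile this head-on rather than proceed to the decay estimate as if the eigenvalue bound were in hand. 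The paper instead argues through \emph{singular} values: for $\eta\ll1$ it writes $N^{-1}T=T+E$ with $\|E\|=O(\sqrt{\eta})$ and applies Weyl's inequality to $\sigma_{\min}$; for $\eta\gg1$ it computes $\sigma_{\max}(N^{-1}T)=O(\sqrt{\eta})$, invokes the earlier claim that $\kappa(N^{-1}T)$ is $\eta$-independent to get $\sigma_{\min}(N^{-1}T)=O(\sqrt{\eta})$, and then uses $\lambda_{\min}(N^{-1}T)\geq\sigma_{\min}(N^{-1}T)$ (valid here because the eigenvalues are real and nonnegative by the preceding Claim). That is a genuinely different chain of inequalities than yours; if you want the $\sqrt{\eta}$ scaling, you need to follow the singular-value route, not the block-triangular eigenvalue route.

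The second stage also has a gap that the first stage disguises. Your Lyapunov plan $V(\xi)=\alpha\|u\|^2+\|v\|^2$ followed by ``fold the weight-induced prefactor into the exponent'' does not work: norm equivalence between $V$ and $\|\xi\|^2$ produces a multiplicative constant $\sqrt{\max\{1,\alpha\}/\min\{1,\alpha\}}$ in front of the decay, not a change in the exponential rate. Concretely, the $v$-block decays at the $\eta$-independent rate $\lambda_{\min}(R+P^\top Q^{-1}P)$, and pushing this through Duhamel into the $u$-equation picks up a $\sqrt{\eta}$-sized coefficient on the forcing term. So the most your split yields is
\begin{equation}
\|\xi(t)\|\leq \bigl(1+C\sqrt{\eta}\bigr)\,e^{-\min\{\lambda_{\min}(Q),\,\lambda_{\min}(R+P^\top Q^{-1}P)\}\,t}\,\|\xi_0\|,
\end{equation}
i.e.\ an $O(1)$ rate with an $\eta$-growing prefactor, not an $O(\sqrt{\eta})$ rate with a unit prefactor. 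No choice of the scalar weight $\alpha$ can convert one into the other. If you want to reproduce the lemma's conclusion as written, the exponential bound should be read off directly from the $\sigma_{\min}$ estimate via $\frac{d}{dt}\|\xi\|^2=-2\langle\xi,N^{-1}T\xi\rangle$ together with the Claim that eigenvalues of $N^{-1}T$ are real nonnegative — which is the paper's route — rather than reconstructed from a block-decoupled energy argument.
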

\begin{proof}
    All we need to show is that when $\eta$ is extreme, the least eigenvalue of $N^{-1}T$ remains at least of order $O(1)$. 
    
    We first consider the case $\eta\ll1$. Note that
    $$N^{-1}T = \begin{bmatrix}
        Q & \sqrt{\eta} Q^{-1}P(R+P^\top Q^{-1} P)\\
        0 &  R+P^\top Q^{-1} P 
    \end{bmatrix} = T + E,\quad E = \begin{bmatrix}
        0 & \sqrt{\eta} Q^{-1}P(R+P^\top Q^{-1} P)\\
        0 &  0
    \end{bmatrix},$$
    and by Weyl's inequality, $|\sigma_{\min}(N^{-1}T)-\sigma_{\min}(T)|\leq |\sigma_{\max}(E)|= O(\sqrt{\eta})$, so we have $\sigma_{\min}(N^{-1}T)\geq O(1)$ for sufficiently small $\eta$.

    In the case $\eta\gg 1$, $\sigma_{\max}(N^{-1}T)= \max\{ \|N^{-1}T\xi\|_2:\|\xi\|=1\} = O(\sqrt{\eta})$. The $O(1)$ order condition number of $N^{-1} T$ implies that $\sigma_{\min}(N^{-1}T)=O(\sqrt{\eta})$.

    By the relation $\lambda_{\min}(N^{-1}T)\geq \sigma_{\min}(N^{-1}T)$, we conclude that there exists a constant $C>0$ such that $\lambda_{\min}(N^{-1}T)\geq C\max\{1,\sqrt{\eta}\}$.
\end{proof}

\section{Convergence of the continuous game}\label{sec:infinite-dim}
In this section, we analyze the convergence of the two-timescale gradient descent-ascent flow under the Wasserstein metric (\ref{eqn:PDE}) for finite $\eta$. We start with assumptions on $K$. The following assumes that the gradients of $K$ are Lipschitz continuous, and $K$ is strongly convex-concave outside a ball (when two points are more than $R>0$ apart). Note that $K$ is allowed to be locally nonconvex-nonconcave. 
\begin{assumption}\label{assump:K}
    There exist $\kappa_x, \kappa_y, m_x, m_y>0$ such that for all $x\in\mathcal{X}, y\in\mathcal{Y}$, we have
    \begin{equation}\label{assump:cond3_1}
        \begin{aligned}
            &\langle \nabla_x K(x_1, y)- \nabla_x K(x_2, y), x_1-x_2\rangle \geq \kappa_x \|x_1-x_2\|^2, ~\text{for all}~\|x_1 -x_2\|\geq R, y\in\mathcal{Y},\\
            &\langle \nabla_y K(x, y_1)- \nabla_y K(x, y_2), y_1-y_2\rangle \leq -\kappa_y \|y_1-y_2\|^2, ~\text{for all}~x\in\mathcal{X}, \|y_1-y_2\|\geq R,
        \end{aligned}
    \end{equation}
    and 
        \begin{equation}
        \begin{aligned}
            &\langle \nabla_x K(x_1, y)- \nabla_x K(x_2, y), x_1-x_2\rangle \geq -m_x \|x_1-x_2\|^2, ~\text{for all}~\|x_1 -x_2\|<R, y\in\mathcal{Y},\\
            &\langle \nabla_y K(x, y_1)- \nabla_y K(x, y_2), y_1-y_2\rangle \leq m_y \|y_1-y_2\|^2, ~\text{for all}~x\in\mathcal{X}, \|y_1-y_2\|< R,
        \end{aligned}
    \end{equation}
Moreover, there exist positive constants $L_{X}, L_{Y}, l_{X}, l_{Y}$ such that
\begin{equation}\label{assump:cond3_2}
    \begin{aligned}
      &\|\nabla_x K(x_1, y_1)- \nabla_x K(x_2, y_2)\| \leq l_X\|x_1-x_2\|+L_Y \|y_1-y_2\|, \\
       &\|\nabla_y K(x_1, y_1)- \nabla_y K(x_2, y_2)\| \leq L_X \|x_1-x_2\|+l_Y \|y_1-y_2\|,
    \end{aligned}
\end{equation}
for all $x_1,x_2\in\mathcal{X}, y_1, y_2\in\mathcal{Y}$.
\end{assumption}
We look at the two-timescale gradient descent-ascent Langevin dynamics, by taking the realizations $X_t$ from the marginal distribution $ p_t = \text{Law}(X_t)$ and $Y_t\sim q_t = \text{Law}(Y_t)$, and they follow
\begin{subequations}\label{eqn:121}
\begin{align}
    dX_t &=-\int_{\mathcal{Y}} \nabla_x K(X_t,y) q_t(y) dy ~dt + \sqrt{2\beta^{-1}} dW_t,\\
    dY_t&=\eta \int_{\mathcal{X}} \nabla_y K(x,Y_t) p_t(x) dx~ dt + \sqrt{2\eta\beta^{-1}} dB_t,
\end{align}
\end{subequations}
where $W, B$ are independent Brownian motions. The motivation for us to look into the coupled overdamped Langevin equations is to take the reflection-synchronous coupling approach introduced by \cite{eberle2016reflection}. Let us consider the coupling for diffusion processes $(X_t, Y_t) $ and $(\tilde{X}_t, \tilde{Y}_t)$, and we use the notation $Z_t = X_t-\tilde{X}_t, Q_t = Y_t- \tilde{Y}_t$. 

Let us fix $\delta>0$ and suppose that we have Lipschitz continuous functions $\frc_i(Z_t, Q_t)$ and $\fsc_i(Z_t, Q_t), i=1,2$ satisfy
\begin{equation}
\begin{aligned}
    &\frc_i(Z_t, Q_t)^2 + \fsc_i(Z_t, Q_t)^2 = 1,\\
    &\frc_1(Z_t, Q_t) = 1 ~\text{if}~\|Z_t\|\geq \delta, \quad  \frc_1(Z_t, Q_t) = 0 ~\text{if}~\|Z_t\|\leq \delta/2,\\
    &\frc_2(Z_t, Q_t) = 1 ~\text{if}~\|Q_t\|\geq \delta, \quad  \frc_2(Z_t, Q_t) = 0 ~\text{if}~\|Q_t\|\leq \delta/2.
\end{aligned}
\end{equation}
We will often write $\frc \equiv \frc(z,q), \fsc \equiv \fsc(z,q)$ for notation simplicity. Then over $(\mathcal{X}\times \mathcal{Y})^2$, a coupling of two solutions to (\ref{eqn:121}) is given by 
\begin{equation}\label{oct291}
\begin{aligned}
    d X_t &=-\int_{\mathcal{Y}} \nabla_x K(X_t,y) q_t(y) dy~dt + \sqrt{2\beta^{-1}} \frc_1 dW_t^{\frc}+ \sqrt{2\beta^{-1}} \fsc_1dW_t^{\fsc},\\
    dY_t&=\eta \int_{\mathcal{X}} \nabla_y K(x,Y_t) p_t(x) dx~dt + \sqrt{2\eta\beta^{-1}}\frc_2 d B_t^{\frc}+ \sqrt{2\eta\beta^{-1}}\fsc_2 d B_t^{\fsc},\\
    d\tilde{X}_t &=-\int_{\mathcal{Y}} \nabla_x K(\tilde{X}_t,\tilde{y}) \tilde{q}_t(\tilde{y}) d\tilde{y}~dt + \sqrt{2\beta^{-1}} \frc_1(I-2e_t^1 e_t^{1,\top}) dW_t^{\frc}+ \sqrt{2\beta^{-1}} \fsc_1dW_t^{\fsc},\\
    d\tilde{Y}_t&=\eta \int_{\mathcal{X}} \nabla_y K(\tilde{x},\tilde{Y}_t) \tilde{p}_t(\tilde{x}) d\tilde{x}~dt + \sqrt{2\eta\beta^{-1}}\frc_2 (I-2e_t^2 e_t^{2,\top}) d B_t^{\frc}+ \sqrt{2\eta\beta^{-1}}\fsc_2 d B_t^{\fsc},
\end{aligned}
\end{equation}
with independent Brownian motions $W^{\frc}, W^{\fsc}, B^{\frc}, B^{\fsc}$, and initial distribution $(X_0, Y_0)\sim \mu, (\tilde{X}_0, \tilde{Y}_0)\sim \nu$. Here $e_t e_t^\top$ is the orthogonal projection onto the unit vector as
\begin{equation}
    e_t^1  =\begin{cases} Z_t/\|Z_t\|, ~\text{if}~Z_t\neq 0,\\ u_1,\quad  \quad\quad\text{if}~Z_t=0,
    \end{cases} e_t^2 =\begin{cases} Q_t/\|Q_t\|, ~\text{if}~Q_t\neq 0,\\ u_2,\quad  \quad\quad \text{if}~Q_t=0,
    \end{cases}
\end{equation}
and $u_1\in\mathcal{X}, u_2\in\mathcal{Y}$ are some arbitrary fixed unit vectors.

Here, we emphasize that applying a reflection coupling individually to each process may not work, as the realization and its reflection can move apart after the coupling time for each process due to the interaction between the two processes. To address this, we employ a mixed reflection-synchronous coupling with regularized reflection functions, avoiding reliance on the coupling time in the local regime. Eventually, we will pass the distance constant $\delta$ to zero to remove the additional error term arises from this construction, to obtain the main result, Theorem \ref{thm:convbycoupling}.

In order to construct an appropriate metric, we define 
\begin{equation}\label{def:kappa}
    \begin{aligned}
        \kappa_x(r,t) &:= r^{-2} \inf\Bigl\{\int_{\mathcal{Y}}(x-x')\cdot \bigl(\nabla_x K(x, y)- \nabla_x K(x', y)\bigr) q_t(y)dy ~\text{s.t.}~\|x-x'\|=r\Bigr\},\\
         \kappa_y(r,t) &:= r^{-2} \inf\Bigl\{-\int_{\mathcal{X}}(y-y')\cdot \bigl(\nabla_y K(x, y)- \nabla_y K(x, y')\bigr) p_t(x)dx ~\text{s.t.}~\|y-y'\|=r\Bigr\}.
    \end{aligned}
\end{equation}
For all $t\geq 0$, both $\kappa_x, \kappa_y$ are continuous in $r$ and satisfy $\lim_{r\to 0} r \kappa(r,t) = 0$ and $\lim_{r\to +\infty} r \kappa(r,t) >0$, based on Assumption \ref{assump:K}. In particular, the existence of $l_X, l_Y$ ensures that $\kappa_x, \kappa_y$ stay bounded for all  $t\geq 0$.

We now state the main convergence result.
\begin{thm}\label{thm:convbycoupling}
     Under Assumption \ref{assump:K}, suppose that $R\leq \sqrt{2\pi\beta^{-1}}\min\big\{\sqrt{m_x^{-1}},\sqrt{m_y^{-1}}\big\}$. With a constant $\gamma>0$ to be chosen, if
    \begin{equation*}
        \begin{aligned}
            L_X\leq \frac{c_1\varphi_1(R)}{2\gamma \eta }~~\text{and}~~
            L_Y\leq \frac{1}{2}c_2\gamma\eta \varphi_2(R) ,
        \end{aligned}
    \end{equation*}
 then there exist constants $0<c< \min\{c_1, \eta c_2\}, A=2\max\{\varphi_1(R)^{-1}, \gamma^{-1}\varphi_2(R)^{-1}\}$ such that
 \begin{equation}\label{eq:wassdecay}
     W_1((p_t,q_t), (p^*, q^*))\leq A\max\{1,\gamma\}e^{-ct} W_1((p_0,q_0), (p^*, q^*)).
 \end{equation}
Here, constants are set as $\varphi_1(R) = \exp(-\frac{\beta}{4}\int_0^R \tilde{\kappa}_x(s)sds)$, $\varphi_2(R) = \exp(-\frac{\beta}{4}\int_0^R \tilde{\kappa}_y(s)sds)$ with  $\tilde{\kappa}(r) := \max\{0, -\inf_{t\geq0}\kappa(r,t)\}$, and
     \begin{equation*}
        \begin{aligned}
             c_1 &= 4\beta^{-1} \Big((e-1)R^2/2 + \sqrt{\frac{8}{\beta \kappa_x}}Re^{\pi/4}+ \frac{4}{\beta \kappa_x}\Big)^{-1},\\
            c_2 &= 4\beta^{-1} \Big((e-1)R^2/2 + \sqrt{\frac{8}{\beta \kappa_y}}Re^{\pi/4}+ \frac{4}{\beta \kappa_y}\Big)^{-1},
        \end{aligned}
    \end{equation*}
    which are independent of $\eta$. 
\end{thm}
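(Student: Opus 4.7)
The plan is to apply Eberle's reflection-coupling framework to the coupled system \eqref{oct291}, adapted to the two-timescale and min-max structure through a weighted product metric. Following Eberle, I would first construct concave nondecreasing functions $f_1,f_2\colon[0,\infty)\to[0,\infty)$ with $f_i(0)=0$, $f_i'(0)=1$, of the form $f_i(r)=\int_0^r \varphi_i(s)g_i(s)\,ds$ with $\varphi_1(s)=\exp(-\tfrac{\beta}{4}\int_0^s \tilde\kappa_x(u)u\,du)$ and analogously $\varphi_2$, where the piecewise-affine $g_i$ are tuned so that on $[0,R]$
\begin{equation*}
4\beta^{-1}f_i''(r)+r\tilde\kappa_i(r)f_i'(r)\le -c_i f_i(r),
\end{equation*}
with the estimate extending for $r>R$ thanks to the strong convexity-concavity outside a ball in Assumption \ref{assump:K}. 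The explicit constants $c_1,c_2$ in the theorem statement emerge from Eberle's construction together with the hypothesis $R\le \sqrt{2\pi\beta^{-1}}\min\{\sqrt{m_x^{-1}},\sqrt{m_y^{-1}}\}$, which is exactly what makes the construction close in the bad local region. Eberle's bounds further yield the two-sided estimate $\varphi_i(R)r/2\le f_i(r)\le r$.

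Armed with these, I would introduce the weighted semimetric $\rho((x,y),(\tilde x,\tilde y))=f_1(\|x-\tilde x\|)+\gamma f_2(\|y-\tilde y\|)$ and apply It\^o's formula to $\rho(X_t,Y_t,\tilde X_t,\tilde Y_t)$ under the coupling \eqref{oct291}. In the $x$-block the $\fsc$-component of the noise cancels identically while the reflected $\frc$-component contributes $2\sqrt{2\beta^{-1}}\frc_1 e_t^{1,\top}dW_t^{\frc}$ to $dZ_t$, so on $\{Z_t\ne 0\}$ one obtains the favorable second-order term $4\beta^{-1}\frc_1^2 f_1''(\|Z_t\|)$; the $Q_t$-block is analogous with an extra factor $\eta$. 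For the drift I would split, using an optimal $W_1$-coupling $\pi$ of $(q_t,\tilde q_t)$,
\begin{equation*}
\nabla_x K(X_t,y)-\nabla_x K(\tilde X_t,\tilde y)=\bigl(\nabla_x K(X_t,y)-\nabla_x K(\tilde X_t,y)\bigr)+\bigl(\nabla_x K(\tilde X_t,y)-\nabla_x K(\tilde X_t,\tilde y)\bigr),
\end{equation*}
controlling the first piece via \eqref{def:kappa} by $-\kappa_x(\|Z_t\|,t)\|Z_t\|^2$ and the second via \eqref{assump:cond3_2} by $L_Y\|y-\tilde y\|$; an analogous split for $Y$ produces the $L_X$ cross term. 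Taking expectations and using $\E\|y-\tilde y\|=W_1(q_t,\tilde q_t)\le \E\|Q_t\|$ together with the differential inequality for $f_i$, this yields (modulo the $O(\delta)$ terms discussed below)
\begin{equation*}
\tfrac{d}{dt}\E[\rho]\le -c_1\E f_1(\|Z_t\|)-\gamma\eta c_2\E f_2(\|Q_t\|)+L_Y\E\|Q_t\|+\gamma\eta L_X\E\|Z_t\|,
\end{equation*}
where I have used $f_i'\le 1$. The lower bound $f_i(r)\ge \varphi_i(R)r/2$ together with the smallness hypotheses on $L_X,L_Y$ then absorb the two cross terms into half of each contraction term, giving $\tfrac{d}{dt}\E[\rho]\le -c\,\E[\rho]$ for some $c<\min\{c_1,\eta c_2\}$; combining with the norm equivalence $A^{-1}(\|z\|+\gamma\|q\|)\le\rho\le\|z\|+\gamma\|q\|$ (and Kantorovich duality to identify the infimum over couplings) produces \eqref{eq:wassdecay}.

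The main obstacle is the $\delta$-regularization in the mixed coupling: on $\{\|Z_t\|\le\delta/2\}$ (respectively $\{\|Q_t\|\le\delta/2\}$) the coefficient $\frc_1$ (resp.\ $\frc_2$) vanishes, so the favorable $f_i''$ term is lost and only drift survives. This device is forced by the two-timescale structure, since coalescence of $X_t$ with $\tilde X_t$ need not persist once the $Y$-interaction pushes them apart, so one cannot simply stop at a coupling time. I would keep $\delta>0$ throughout the It\^o--Gr\"onwall step, where the resulting error is $O(\delta)$ uniformly on compact time intervals because $f_i'(0)=1$ makes the integrand linear near the origin, and then pass $\delta\downarrow 0$ in the integrated inequality for $\E[\rho]$ to recover clean contraction. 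A secondary technicality is measurable-in-$t$ selection of the optimal $W_1$-couplings of $(q_t,\tilde q_t)$ and $(p_t,\tilde p_t)$ required for the drift splits, which follows from standard measurable-selection theorems on Polish spaces.
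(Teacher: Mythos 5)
Your proposal is correct and follows essentially the same route as the paper: mixed synchronous--reflection coupling with a $\delta$-regularization, a weighted concave distance $\rho=f_1(\|Z_t\|)+\gamma f_2(\|Q_t\|)$ with $f_i$ built \`a la Eberle to satisfy $4\beta^{-1}f_i''-\kappa_i r f_i'\le -c_i f_i$, the same drift split into a $\kappa$-controlled and an $L_X/L_Y$-controlled piece, absorption of the cross terms via $f_i\ge \varphi_i(R)r/2$ under the smallness hypotheses, and $\delta\downarrow 0$ after Gr\"onwall. One small simplification available to you: there is no need for an optimal $W_1$-coupling of $(q_t,\tilde q_t)$ nor for measurable selection, since the coupled pair $(Y_t,\tilde Y_t)$ from the SDE system \eqref{oct291} is already a coupling of $(q_t,\tilde q_t)$ and directly gives the bound $L_Y\,\E\|Y_t-\tilde Y_t\|$, which is exactly what the paper does.
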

\begin{remark}
From the convergence result above, we observe that a higher temperature $\beta^{-1}$, as well as larger convexity $(\kappa_x)$ and concavity $(\kappa_y)$ parameters, can help relax the geometric conditions on $K(x,y)$ needed to guarantee convergence. Furthermore, when $\eta\gg 1$ or $\eta\ll 1$, choosing $\gamma = \eta^{-1}$ further relaxes the restrictions on $L_X, L_Y$.

It may appear from the convergence rate $\propto \min\{1, \eta\}$ that one should choose a large $\eta$ to accelerate convergence; however, the prefactor in \eqref{eq:wassdecay} would become excessively large as well, scaling with $\max\{\eta, \eta^{-1}\}$. More importantly, in order to simulate \eqref{eq:meanfieldGDA} when $\eta \gg 1$, small time step size has to be taken due to the stiffness, thus algorithmically, it is most advantageous to work in a regime away from the near quasi-static regimes (neither $\eta \ll 1$ nor $\eta \gg 1$). 
\end{remark}

\paragraph{Proof of Theorem \ref{thm:convbycoupling} } The proof is divided into several parts. 
In Part 1, we begin by subtracting the stochastic differential equations and introducing a combination of chosen concave functions to define a $L^1$ distance function. Applying It\^{o}'s formula to this distance function provides a foundation for analyzing the dynamics. Then in Part 2, we refine the analysis of the interplay between the drift and noise and identify the contractivity constant. This is achieved by constructing an appropriate concave and increasing function $f$ for the
$L^1$ distance function (Claim \ref{claim:nov5}). Using the contraction constants derived in Part 2,  we then in Part 3 select suitable parameters such as gradient Lipschitz constants for $K$ to obtain contraction. Finally, in Part 4, we conclude by comparing distance functions to establish a Wasserstein-1 distance contraction result.

\paragraph{Part 1} We start by subtracting equations in (\ref{oct291}) to get
\begin{subequations}
    \begin{align}
        dZ_t &= -\Big(\int_{\mathcal{Y}} \nabla_x K(X_t,y) q_t(y) dy -\int_{\mathcal{Y}} \nabla_x K(\tilde{X}_t,\tilde{y}) \tilde{q}_t(\tilde{y}) d\tilde{y}\Big)~dt+2\sqrt{2\beta^{-1}}\frc_1 e_t^1e_t^{1,\top} dW_t^{\frc}.\\
        dQ_t & = \eta \Big(\int_{\mathcal{X}} \nabla_y K(x,Y_t) p_t(x) dx-\int_{\mathcal{X}} \nabla_y K(\tilde{x},\tilde{Y}_t) \tilde{p}_t(\tilde{x}) d\tilde{x}\Big)~ dt+2\sqrt{2\eta\beta^{-1}}\frc_2 e_t^2e_t^{2,\top} dB_t^{\frc}.
    \end{align}
\end{subequations}
Then by It\^{o}'s formula, 
\begin{equation*}
    \begin{aligned}
d\|Z_t\| &= -e_t^1\cdot \Big(\int_{\mathcal{Y}} \nabla_x K(X_t,y) q_t(y) dy -\int_{\mathcal{Y}} \nabla_x K(\tilde{X}_t,\tilde{y}) \tilde{q}_t(\tilde{y}) d\tilde{y}\Big)~dt+2\sqrt{2\beta^{-1}}\frc_1 e_t^{1,\top} dW_t^{\frc},\\
d\|Q_t\| & = \eta e_t^2 \cdot \Big(\int_{\mathcal{X}} \nabla_y K(x,Y_t) p_t(x) dx-\int_{\mathcal{X}} \nabla_y K(\tilde{x},\tilde{Y}_t) \tilde{p}_t(\tilde{x}) d\tilde{x}\Big)~ dt+ 2 \sqrt{2\eta\beta^{-1}}\frc_2 e_t^{2,\top} dB_t^{\frc},
    \end{aligned}
\end{equation*}
since there is no It\^{o} correction due to $\d^2_{z/|z|}|z|=0$ for $z\neq 0$ and the noise coefficient function $\frc$ vanishes for $z = 0$. 
We now write $r_1(t)  = \|Z_t \|, r_2(t)= \| Q_t\|$, and choose strictly increasing concave functions $f_1, f_2 \in C^1([0,\infty))\cap  C^2((0,\infty)) $ such that $f_1(0) = f_2(0)=0$. We consider the metric
\begin{equation}
    \rho_t =  f_1(r_1(t)) +  \gamma f_2(r_2(t)).
\end{equation}
Then the It\^{o}'s formula gives that
\begin{equation}\label{nov1_1}
\begin{aligned}
    d\rho_t &= 2f_1'(r_1(t)) \sqrt{2\beta^{-1}}\frc_1 e_t^{1,\top} dW_t^{\frc} + 2 f_2'(r_2(t))\gamma \sqrt{2\eta\beta^{-1}}\frc_2e_t^{2,\top} dB_t^{\frc}\\
    &~~~ -e_t^1\cdot \Big(\int_{\mathcal{Y}} \nabla_x K(X_t,y) q_t(y) dy -\int_{\mathcal{Y}} \nabla_x K(\tilde{X}_t,\tilde{y}) \tilde{q}_t(\tilde{y}) d\tilde{y}\Big)f_1'(r_1(t)) dt\\
    &~~~+\gamma \eta e_t^2 \cdot \Big(\int_{\mathcal{X}} \nabla_y K(x,Y_t) p_t(x) dx-\int_{\mathcal{X}} \nabla_y K(\tilde{x},\tilde{Y}_t) \tilde{p}_t(\tilde{x}) d\tilde{x}\Big)f_2'(r_2(t)) dt\\
    &~~~+ 4\beta^{-1}\frc_1^2 f_1''(r_1(t)) dt + 4\gamma\beta^{-1}\eta  \frc_2^2 f_2''(r_2(t)) dt.
\end{aligned}
\end{equation}
Note that $\hat{W}_t = \int_0^t e_s^{1,\top} dW_s^{\frc}$ and $\hat{B}_t = \int_0^t e_s^{2,\top} dB_s^{\frc}$ are independent one-dimensional Brownian motions. 
\paragraph{Part 2}The next step is to analyze the deterministic part in (\ref{nov1_1}). 
As $e_t^1 =  (X_t-\tilde{X}_t)/ r_1(t)$, we get
\begin{equation}
    \begin{aligned}
       e_t^1&\cdot\Big(\int_{\mathcal{Y}} \nabla_x K(X_t,y) q_t(y) dy -\int_{\mathcal{Y}} \nabla_x K(\tilde{X}_t,\tilde{y}) \tilde{q}_t(\tilde{y}) d\tilde{y}\Big) \\
       & = \int_{\mathcal{Y}}(X_t-\tilde{X}_t)\cdot \big(\nabla_x K(X_t, y)- \nabla_x K(\tilde{X}_t, y\big) q_t(y)dy/ r_1(t)\\
       &\quad + (X_t-\tilde{X}_t)\cdot\Big(\int_{\mathcal{Y}} \nabla_x K(\tilde{X}_t,y) q_t(y) dy -\int_{\mathcal{Y}} \nabla_x K(\tilde{X}_t,\tilde{y}) \tilde{q}_t(\tilde{y}) d\tilde{y}\Big)/r_1(t)\\
       &\geq\kappa_x(r_1(t),t) \|X_t-\tilde{X}_t\|-L_Y \E[\|Y_t-\tilde{Y}_t\|].
    \end{aligned}
\end{equation}
And similarly,
\begin{equation}
    \begin{aligned}
        e_t^2\cdot\Big(\int_{\mathcal{X}} \nabla_y K(x,Y_t) p_t(x) dx& -  \int_{\mathcal{X}} \nabla_y K(\tilde{x},\tilde{Y}_t) \tilde{p}_t(\tilde{x}) d\tilde{x}\Big)\\
        &\leq -\kappa_y(r_2(t),t) \|Y_t-\tilde{Y}_t\|+L_X\E[\|X_t-\tilde{X}_t\|].
    \end{aligned}
\end{equation}
Therefore, the deterministic part in (\ref{nov1_1}) can be bounded by
\begin{equation}\label{eqn:dec2_1}
    \begin{aligned}
        &-e_t^1\cdot \Big(\int_{\mathcal{Y}} \nabla_x K(X_t,y) q_t(y) dy -\int_{\mathcal{Y}} \nabla_x K(\tilde{X}_t,\tilde{y}) \tilde{q}_t(\tilde{y}) d\tilde{y}\Big)f_1'(r_1(t)) \\
    &~~~+\gamma \eta e_t^2 \cdot \Big(\int_{\mathcal{X}} \nabla_y K(x,Y_t) p_t(x) dx-\int_{\mathcal{X}} \nabla_y K(\tilde{x},\tilde{Y}_t) \tilde{p}_t(\tilde{x}) d\tilde{x}\Big)f_2'(r_2(t)) \\
    &~~~+ 4\beta^{-1}\frc_1^2 f_1''(r_1(t))  + 4\gamma\beta^{-1}\eta  \frc_2^2 f_2''(r_2(t)) \\
    &\leq - \kappa_x(r_1(t),t) r_1(t) f_1'(r_1(t))+4\beta^{-1}\frc_1^2 f_1''(r_1(t))-\gamma \eta\kappa_y(r_2(t),t) r_2(t) f_2'(r_2(t))+4\gamma\beta^{-1}\eta  \frc_2^2 f_2''(r_2(t)) \\
    &~~~ +\gamma \eta L_X\E[\|X_t-\tilde{X}_t\|]f_2'(r_2(t))+L_Y \E[\|Y_t-\tilde{Y}_t\|]f_1'(r_1(t)) = :\Delta.
    \end{aligned}
\end{equation}
As in \cite{eberle2016reflection}, one can construct $f_1, f_2$ to nicely control the right side of (\ref{eqn:dec2_1}) for contraction. 
\begin{claim}\label{claim:nov5}
    There exist $c_1, c_2>0$ and strictly increasing concave functions $f_1, f_2$ with $f_1(0)=f_2(0)=0$ and $0\leq f_1', f_2'\leq 1$ such that for all $r>0, t\geq 0$,
    \begin{equation}\label{eqn:dec2_2}
        \begin{aligned}
            & - \kappa_x(r,t) r f_1'(r)+4\beta^{-1} f_1''(r)\leq -c_1 f_1(r),\\
            & - \kappa_y(r,t) r f_2'(r)+4\beta^{-1} f_2''(r)\leq -c_2f_2(r). 
        \end{aligned}
    \end{equation}
    If $R\leq \sqrt{2\pi\beta^{-1}}\min\big\{\sqrt{m_x^{-1}},\sqrt{m_y^{-1}}\big\}$, then we may choose 
    \begin{equation}
        \begin{aligned}
            c_1 &= 4\beta^{-1} \Big((e-1)R^2/2 + \sqrt{\frac{8}{\beta \kappa_x}}Re^{\pi/4}+ \frac{4}{\beta \kappa_x}\Big)^{-1},\\
            c_2 &= 4\beta^{-1} \Big((e-1)R^2/2 + \sqrt{\frac{8}{\beta \kappa_y}}Re^{\pi/4}+ \frac{4}{\beta \kappa_y}\Big)^{-1}.
        \end{aligned}
    \end{equation}
\end{claim}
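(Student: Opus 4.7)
The plan is to adapt Eberle's concave-envelope construction to the present two-population setting. By the symmetry between the two estimates in \eqref{eqn:dec2_2}, I will focus on $f_1$; the construction of $f_2$ is identical with $(\kappa_x,m_x)$ replaced by $(\kappa_y,m_y)$. The idea is to write $f_1'(r)=\varphi_1(r\wedge R_1)g_1(r)$ as the product of a positive, decreasing weight $\varphi_1$ that exactly cancels the worst-case negative contribution of $\kappa_x(r,t)$ on the non-contractive ball $\{r\leq R\}$, and a slowly decaying profile $g_1$ that generates the Lyapunov term, with a cutoff $R_1>R$ to be chosen.

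Concretely I would set
\begin{equation*}
\varphi_1(r) := \exp\Bigl(-\tfrac{\beta}{4}\int_0^r s\,\tilde{\kappa}_x(s)\,ds\Bigr), \qquad \Phi_1(r):=\int_0^r\varphi_1(s)\,ds,\qquad \Psi_1(R_1):=\int_0^{R_1}\frac{\Phi_1(s)}{\varphi_1(s)}\,ds.
\end{equation*}
Under Assumption \ref{assump:K}, $\tilde{\kappa}_x$ is supported in $[0,R]$ with $\tilde{\kappa}_x\leq m_x$, so $\varphi_1$ is decreasing on $[0,R]$ and constant on $[R,\infty)$; the hypothesis $R\leq\sqrt{2\pi\beta^{-1}/m_x}$ then forces $\tfrac{\beta}{4}\int_0^R s\,\tilde{\kappa}_x(s)\,ds\leq \pi/4$ and in particular $\varphi_1(R)\geq e^{-\pi/4}$. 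I would then choose $g_1:[0,\infty)\to[\tfrac12,1]$ decreasing, with $g_1(0)=1$, $g_1'(r)=-\Phi_1(r)/(2\Psi_1(R_1)\varphi_1(r))$ on $[0,R_1]$, and $g_1\equiv\tfrac12$ on $[R_1,\infty)$. The resulting $f_1(r):=\int_0^r\varphi_1(s\wedge R_1)g_1(s)\,ds$ is $C^1$, vanishes at $0$, concave with $0<f_1'\leq 1$ (since both factors of $f_1'$ are positive and decreasing), and by the key identity $4\beta^{-1}\varphi_1'=-r\tilde{\kappa}_x\varphi_1$ satisfies on $(0,R_1)$
\begin{equation*}
4\beta^{-1}f_1''(r) = -r\tilde{\kappa}_x(r)f_1'(r) - \frac{2\beta^{-1}}{\Psi_1(R_1)}\Phi_1(r).
\end{equation*}
Combined with $-\kappa_x(r,t)\leq \tilde{\kappa}_x(r)$ (by definition of $\tilde{\kappa}_x$) and $f_1\leq \Phi_1$ (from $g_1\leq 1$), this yields the desired inequality on $(0,R_1)$ with contraction rate $c_1=2\beta^{-1}/\Psi_1(R_1)$. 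On $[R_1,\infty)$ the weight $\tilde{\kappa}_x$ vanishes and $f_1$ is affine, so the inequality reduces to $\kappa_x r f_1'(r)\geq c_1 f_1(r)$, which by $\kappa_x(r,t)\geq \kappa_x$ (Assumption \ref{assump:K}) holds provided $c_1\leq \kappa_x$ and $\kappa_x R_1 f_1'(R_1)\geq c_1 f_1(R_1)$.

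The explicit constants in the claim then emerge from optimizing $R_1$ so that these two regimes are compatible, together with a sharp bound on $\Psi_1(R_1)$. Splitting the integral at $R$: on $[0,R]$, the elementary inequality $e^x-1\leq (e-1)x$ for $x\in[0,1]$ applied to the ratios $\varphi_1(u)/\varphi_1(s)$ (whose exponents are bounded by $\pi/4$) produces the $(e-1)R^2/2$ contribution; on $[R,R_1]$ the constancy of $\varphi_1$ together with $\varphi_1(R)\geq e^{-\pi/4}$ and $\Phi_1(s)=\Phi_1(R)+\varphi_1(R)(s-R)$ produces the remaining $\sqrt{8/(\beta\kappa_x)}R e^{\pi/4}+4/(\beta\kappa_x)$ terms, with the choice $R_1-R=\sqrt{8/(\beta\kappa_x)}$ tuned to saturate the outer-region compatibility $c_1\leq \kappa_x$. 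Reassembling these bounds into $\Psi_1(R_1)$ and recalling $c_1=2\beta^{-1}/\Psi_1(R_1)$ reproduces the denominator in the statement; the same derivation with $(\kappa_y,m_y)$ yields $c_2$.

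\textbf{Main obstacle.} The chief technical delicacy lies in the joint calibration of $R_1$ and $g_1(R_1)$: the former must be small enough to keep $\Psi_1(R_1)$ tight, yet large enough for the outer-region linear contractivity to accommodate the resulting $c_1$; the latter must be strictly positive so that $f_1$ extends concavely to $\infty$ with positive slope, which is why $g_1$ is normalized to $\tfrac12$ at $R_1$ rather than $0$. The sharp inner estimate that produces the constant $(e-1)R^2/2$ (rather than a cruder $e^{\pi/4}R^2/2$) requires the convexity inequality $e^x-1\leq(e-1)x$ on $[0,1]$ applied to a Jensen-type bound for the weighted double integral; this careful estimation step, together with the book-keeping in gluing the inner and outer regions across $R_1$, is the main computational obstacle.
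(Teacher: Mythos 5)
Your construction is essentially the paper's proof, which implements Eberle's concave-envelope recipe in Lemmas~\ref{lem:appB1}--\ref{lem:appB2}: the same weight $\varphi_1(r)=\exp\bigl(-\tfrac{\beta}{4}\int_0^r s\,\tilde\kappa_x(s)\,ds\bigr)$, the same linear profile $g_1$ with $g_1'(r)=-\Phi_1(r)/\bigl(2\Psi_1(R_1)\varphi_1(r)\bigr)$ on $[0,R_1]$ and $g_1\equiv\tfrac12$ beyond, the same contraction constant $c_1=2\beta^{-1}/\Psi_1(R_1)$, and the same $(e-1)$-bound on $\int_0^R\Phi_1\varphi_1^{-1}$ together with $R_1-R_0\le\sqrt{8/(\beta\kappa_x)}$ to produce the explicit denominator. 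The only mild divergence is in the outer region $r\ge R_1$: you reduce it to the two affine conditions $c_1\le\kappa_x$ and $\kappa_x R_1 f_1'(R_1)\ge c_1 f_1(R_1)$ (which do hold under your choice $R_1=R+\sqrt{8/(\beta\kappa_x)}$, though you leave their verification to the reader), whereas the paper avoids this gluing step entirely by defining $R_1$ as the smallest radius with $\kappa(r,t)R_1(R_1-R_0)\ge 2a/b$ and then chaining $r/R_1\ge\Phi_1(r)/\Phi_1(R_1)$ with the bound $\int_{R_0}^{R_1}\Phi_1\varphi_1^{-1}\ge\Phi_1(R_1)(R_1-R_0)\varphi_1(R_0)^{-1}/2$ --- a formulation that makes the outer-region estimate fall out automatically and is perhaps cleaner to audit.
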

We leave the construction and proof of the claim to Appendix \ref{sec:appendixb}.

\paragraph{Part 3} Note that $0\leq f_i'\leq 1$, and $\frc_i(Z_t, Q_t)<1$ only if $r_i(t)<\delta$. Then by Claim \ref{claim:nov5}, (\ref{eqn:dec2_1}) becomes
\begin{equation}\label{eqn:dec3_1}
    \begin{aligned}    
    \Delta&\leq -\frc_1^2 c_1 f_1(r_1(t)) -\gamma\eta\frc_2^2 c_2 f_2(r_2(t)) +\gamma \eta L_X\E[\|X_t-\tilde{X}_t\|]+L_Y \E[\|Y_t-\tilde{Y}_t\|]\\
    &\quad- \Big((1-\frc_1^2)\kappa_x(r_1(t),t) r_1(t) f_1'(r_1(t))+\gamma \eta(1-\frc_2^2)\kappa_y(r_2(t),t) r_2(t) f_2'(r_2(t))\Big) \\
    &\leq - c_1 f_1(r_1(t)) -\gamma \eta c_2 f_2(r_2(t)) + (c_1+\gamma \eta c_2)\delta+\Big(\sup_{t\geq0}\sup_{r<\delta}(r\kappa_x(r,t)^-)+\gamma \eta\sup_{t\geq0}\sup_{r<\delta}(r\kappa_y(r,t)^-)\Big)\\
    &\quad +\gamma \eta L_X\E[\|X_t-\tilde{X}_t\|]+L_Y \E[\|Y_t-\tilde{Y}_t\|],
    \end{aligned}
\end{equation}
where we use the relation $f_i(r_i(t))\leq r_i(t)\leq \delta$ when $r_i(t)\leq \delta$, to obtain the bound $-\frc_i^2 c_i f_i(r_i(t)) \leq -\frc_i^2 c_i (f_i(r_i(t))-\delta) \leq - c_i (f_i(r_i(t))-\delta)$ when $r_i(t)\leq \delta$. Taking expectations on both sides of (\ref{eqn:dec3_1}), if there exists $\varepsilon <\min\{c_1, \eta c_2\}$ satisfying
\begin{equation}\label{eqn:dec3_2}
    \gamma \eta L_X\E[\|X_t-\tilde{X}_t\|]+L_Y \E[\|Y_t-\tilde{Y}_t\|] \leq \varepsilon \E[\rho_t],
\end{equation}
then we get the inequality 
\begin{equation}
    \frac{d}{dt} \E[\rho_t] \leq -c \E[\rho_t] + m(\delta),
\end{equation}
with $c = \min\{c_1-\varepsilon, \eta c_2-\varepsilon\}$ and 
\begin{equation}
    m(\delta) = (c_1+\gamma \eta c_2)\delta+\Big(\sup_{t\geq0}\sup_{r<\delta}(r\kappa_x(r,t)^-)+\gamma \eta\sup_{t\geq0}\sup_{r<\delta}(r\kappa_y(r,t)^-)\Big).
\end{equation}
By Gr\"{o}nwall's inequality, we obtain that
\begin{equation}
    \E[\rho_t] \leq e^{-ct}\E[\rho_0] + m(\delta)(1-e^{-ct})/c. 
\end{equation}
Before moving on to Wasserstein-1 distance, we need to find the condition when (\ref{eqn:dec3_2}) is valid. By (\ref{eqn:dec3_4}) and computations in Appendix \ref{sec:appendixb}, we learn that $\varphi_i(r)/2\leq f_i'(r)\leq 1$ and $ \varphi_i(R)r/2\leq f_i(r)\leq r$. Therefore, if
\begin{equation}
    \gamma \eta L_X < \frac{c_1}{2}\varphi_1(R) \quad \text{and}\quad L_Y < \frac{\gamma \eta c_2}{2}\varphi_2(R),
\end{equation}
with $\varphi_1(R) = \exp(-\frac{\beta}{4}\int_0^R \tilde{\kappa}_x(s)sds), \varphi_2(R) = \exp(-\frac{\beta}{4}\int_0^R \tilde{\kappa}_y(s)sds)$, (\ref{eqn:dec3_2}) can hold.
\paragraph{Part 4} Finally, since $\varphi_i(R)r/2\leq f_i(r)\leq r$ for $i=1,2$, for a coupling for diffusion processes $(X_t, Y_t)$ and $(\tilde{X}_t, \tilde{Y}_t)$, we learn that the law of $(X_t, Y_t), (\tilde{X}_t,\tilde{Y}_t)$, which is $(p_t, q_t), (\tilde{p}_t, \tilde{q}_t)$ respectively, follows that
\begin{equation}
\begin{aligned}
    W_1((p_t, q_t), (\tilde{p}_t, \tilde{q}_t))&\leq A \E[d_f((X_t, Y_t), (\tilde{X}_t, \tilde{Y}_t))] = A\E[\rho_t]\\
    &\leq Ae^{-ct}\E[\rho_0]+Am(\delta)(1-e^{-ct})/c\\
    &\leq A\max\{1,\gamma\}e^{-ct} W_1((p_0, q_0), (\tilde{p}_0, \tilde{q}_0))+Am(\delta)(1-e^{-ct})/c,
    \end{aligned}
\end{equation}
with $A = 2\max\{\varphi_1(R)^{-1}, \gamma^{-1}\varphi_2(R)^{-1}\}$. Due to the assumption that both $\kappa_x, \kappa_y$ are continuous in $r$ and satisfy $\lim_{r\to 0} r \kappa(r,t) = 0$, we may pass the limit $\delta\to 0$ and  minimize over all couplings of $\mu,\nu$ to remove the error term. The convergence to the MNE is done by replacing $(\tilde{p}_t, \tilde{q}_t), (\tilde{p}_0, \tilde{q}_0)$ with $(p^*, q^*)$ to reach the conclusion. $\Box$

\section{Discussion}

In this paper, we analyzed the convergence behavior of two-timescale GDA dynamics in both finite-dimensional and infinite-dimensional settings. Leveraging tools such as hypocoercivity and coupling methods, we provided rigorous results on the interplay between the learning rate ratio and convergence properties in continuous time. Specifically, we derived quantitative estimates of convergence rates for finite-dimensional quadratic games and established Wasserstein-1 convergence guarantees for mean-field GDA dynamics under reasonable assumptions.

Our results address the importance of the learning rate ratio in determining the dynamics' behavior. Notably, for the mean-field min-max optimization problem, we addressed an open question posed by \cite{wang2024open}, providing convergence guarantees of mean-field GDA for locally nonconvex-nonconcave objective functions when high temperature is considered.

Future research could explore extending the hypocoercivity approach to analyze mean-field GDA, particularly by tackling the challenges posed by its nonlinear drift structure, which is a promising direction for further study.


\section*{Acknowledgment}

This work is supported in part by National Science Foundation via awards DMS-2309378 and IIS-2403276. 

\appendix
\section{Appendix}\label{sec:appendixa}
The purpose of this section is to provide an alternative proof and perspective of \cite[Proposition 3.1]{wang2024local} using the averaging method.
\paragraph{Convergence rate when interaction dominates}
We consider the matrix $M_{\gamma} = S+\gamma L$ with $S = \begin{bmatrix}
            Q & 0\\
            0 &  R
        \end{bmatrix}$,  $L = \begin{bmatrix}
            0 & P\\
            -P^\top &  0
\end{bmatrix}$ accompanied by a sufficiently large $\gamma>0$. \\
Consider a solution $\phi(t)\in\Rm^{n+m}$ to
\begin{equation}\label{eqn:dec3_app1}
    \dot{\phi}(t) = -M_{\gamma} \phi(t) = -S\phi - \gamma L \phi.
\end{equation}
is of the form
\begin{equation}\label{eqn:dec3_app2}
    \phi(t) = a(t)v(\gamma t) + \frac{1}{\gamma} \psi(t).
\end{equation}
Here $v\in \Rm^{n+m}$ solves 
\begin{equation}\label{eqn:skew}
            \begin{aligned}
                \dot{v}(t) = - L v, \quad \text{with}~~v(0) = v_0,
            \end{aligned}
\end{equation}
$a(t)\in C^2(\Rm)$ is a scalar function, and $\psi(t)\in\Rm^{m+n}$ is to be chosen. Note that all eigenvalues of the skew-symmetric matrix $L$ are purely imaginary, so that every element in the fundamental set of solutions to (\ref{eqn:skew}) is periodic. We thus can find a common period $T_0>0$ for the solution to (\ref{eqn:skew}). Plugging in the ansatz (\ref{eqn:dec3_app2}) to (\ref{eqn:dec3_app1}) gives that
\begin{equation}\label{eqn:jul21}
    a'(t)v(\gamma t)+ \frac{1}{\gamma} \psi'(t)= -L\psi(t) -a(t)S v(\gamma t) - \frac{1}{\gamma} S \psi(t).
\end{equation}
Now we choose an arbitrary time $t_0>0$. Multiplying (\ref{eqn:jul21}) with $v(\gamma t)$ and integrating over $[t_0/\gamma, t_0/\gamma+T)$, with $T = T_0/\gamma$, then we get for the left side as
\begin{equation}\label{eqn:lhs}
    \begin{aligned}
        \text{LHS}=&\frac{1}{T}\int_{\frac{t_0}{\gamma}}^{\frac{t_0}{\gamma}+T} a'(t) v(\gamma t)^{\top} v(\gamma t) dt + \frac{1}{\gamma T}\int_{\frac{t_0}{\gamma}}^{\frac{t_0}{\gamma}+T} \psi'(t)^{\top} v(\gamma t) dt\\
        & = \frac{1}{T_0}\int_{t_0}^{t_0+T_0} a'\big(s/\gamma\big)v(s)^{\top} v(s) ds + \frac{1}{\gamma T}\int_{\frac{t_0}{\gamma}}^{\frac{t_0}{\gamma}+T} \psi'(t)^{\top} v(\gamma t) dt.
    \end{aligned}
\end{equation}
While for the right side, we have that
\begin{equation}\label{eqn:rhs}
\begin{aligned}
   \text{RHS}=&- \frac{1}{T}\int_{\frac{t_0}{\gamma}}^{\frac{t_0}{\gamma}+T} \langle L\psi(t), v(\gamma t)\rangle dt  - \frac{1}{T}\int_{\frac{t_0}{\gamma}}^{\frac{t_0}{\gamma}+T}a(t)\langle S v(\gamma t), v(\gamma t)\rangle dt \\
   & - \frac{1}{\gamma T}\int_{\frac{t_0}{\gamma}}^{\frac{t_0}{\gamma}+T} \langle S \psi( t), v(\gamma t)\rangle dt\\
   &= - \frac{1}{T_0}\int_{t_0}^{t_0+T_0} a\big(s/\gamma\big)\langle S v(s), v(s)\rangle ds + O(1/\gamma).
   \end{aligned}
\end{equation}
The above estimates can be obtained by the following reasoning: Due to (\ref{eqn:skew}) and the skew-symmetric matrix $L$, one can find $\psi(t)\in C^1(\Rm^{2n})$ with $\|\psi(t)\|, \|\psi'(t)\|<\infty$ uniformly in $t$ such that 
\begin{equation*}
\begin{aligned}
  \Big|- \frac{1}{T}\int_{\frac{t_0}{\gamma}}^{\frac{t_0}{\gamma}+T} \langle L\psi(t), v(\gamma t)\rangle dt\Big|  &=\Big| -\frac{1}{\gamma T}  \int_{\frac{t_0}{\gamma}}^{\frac{t_0}{\gamma}+T}\psi(t)^\top  v'(\gamma t) dt \Big|\\
  & \leq \Big|-\frac{1}{\gamma T}\Big(\psi(t_0/\gamma+T)-\psi(t_0/\gamma)\Big)^\top v(t_0)\Big|+\Big|\frac{1}{\gamma T}  \int_{\frac{t_0}{\gamma}}^{\frac{t_0}{\gamma}+T}\psi'(t)^\top  v(\gamma t) dt\Big|\\
  & \leq \frac{1}{\gamma}\sup_t\|\psi'(t)\|\|v_0\|+ \frac{1}{\gamma}\sup_t \|\psi'(t)\|\|v_0\| = O(1/\gamma),
\end{aligned}
\end{equation*}
where we have used $T_0$-periodicity of $v(t)$ and
$\|v(t)\| = \|v_0\|$ for all $t\geq 0$. Similarly, as $S$ has all eigenvalues bounded, there exists a constant $C>0$ such  that
\begin{equation*}
    \Big|- \frac{1}{\gamma T}\int_{\frac{t_0}{\gamma}}^{\frac{t_0}{\gamma}+T} \langle S \psi( t), v(\gamma t)\rangle dt\Big|\leq \frac{C}{\gamma}\sup_t\|\psi(t)\|\|v_0\|  = O(1/\gamma).
\end{equation*}
In (\ref{eqn:lhs}) and (\ref{eqn:rhs}), we use the Taylor's expansion on $a, a'$ over $[t_0, t_0+T_0]$ and get that
\begin{equation}
  a\big(s/\gamma\big) = a\big(t_0/\gamma\big) + O(1/\gamma) ,
  \quad, a'\big(s/\gamma\big) = a'\big(t_0/\gamma\big) + O(1/\gamma), 
\end{equation}
Combining estimates in (\ref{eqn:lhs}) and (\ref{eqn:rhs}) together, along with
\begin{equation*}
    \Big|\frac{1}{\gamma T}\int_{\frac{t_0}{\gamma}}^{\frac{t_0}{\gamma}+T} \psi'(t)^{\top} v(\gamma t) dt\Big| \leq \frac{1}{\gamma} \sup_t\|\psi'(t)\|\|v_0\| = O(1/\gamma),
\end{equation*}
we thus get
\begin{equation}
a'\big(t_0/\gamma\big)\|v_0\|^2 = -\frac{a\big(t_0/\gamma\big)}{T_0} \int_{t_0}^{t_0+T_0} \langle S v(s), v(s)\rangle ds + O(1/\gamma).
\end{equation}
Note that for $v(t) = [u(t), w(t)]^\top, u(t)\in \Rm^n, w(t)\in \Rm^m$, we have the contraction rate as
\begin{equation}\label{eqn:dec191}
\begin{aligned}
    \mu &:= \frac{1}{T_0\|v_0\|^2} \int_{t_0}^{t_0+T_0} \langle S v(s), v(s)\rangle ds=\frac{1}{T_0\|v_0\|^2} \int_{t_0}^{t_0+T_0} (u(s)^{\top} Qu(s)+ w(s)^{\top} R w(s)) ds.
    \end{aligned}
\end{equation}
Since $t_0>0$ is chosen arbitrarily, for all $t>0$, $a(t)$ follows the dynamics 
\begin{equation}
    a'(t) = -\mu a(t) + O(1/\gamma),
\end{equation}
and thus $a(t)$ will converge to a constant exponentially with a rate $\mu$. 

If $P$ is a square matrix, i.e., $m=n$ and is of full rank, then $-L$ has distinct $2n$ complex eigenvalues $\pm i \sigma_j $ with associated unit-norm eigenvector $[\mp i u_j/\sqrt{2}, w_j/\sqrt{2}]^\top$ for $1\leq j\leq n$, and  $u_j, w_j$ come from $P = U\Sigma W^\top = \sum_{j=1}^n \sigma_j u_j w_j^\top$. By (\ref{eqn:skew}),(\ref{eqn:dec191}), and $\{e^{\pm i \sigma_j}[\mp i u_j/\sqrt{2}, w_j/\sqrt{2}]^\top, 1\leq j\leq n\}$ forming a fundamental set of solutions for $v(t)$, we have the lower bound for the contraction rate as
\begin{equation}
    \mu \geq \frac{1}{2} \min_{1\leq j\leq n}\{ u_j^\top Q u_j+  w_j^\top R w_j \}.
\end{equation}

        This coincides with the finding of \cite[Proposition 3.1]{wang2024local} which shows that the convergence rate of gradient flow with the Jacobian matrix $M_\alpha = \alpha S+L, \alpha\ll 1$ is of the form
        \begin{equation}
            \mu_{M_\alpha} = \frac{1}{2}\alpha \min_{1\leq j\leq n}\{ u_j^\top Q u_j+  v_j^\top R v_j \}+ O(\alpha^3).
        \end{equation}
        Our result obtained from the averaging method agrees with above when rescaling (\ref{eqn:dec3_app1}) by dividing with $\eta$. Moreover, based on random matrix arguments, \cite[Proposition 3.2]{wang2024local} shows that
\begin{equation}
    \E[\min_{1\leq j\leq n}\{ u_j^\top Q u_j+  v_j^\top R v_j \}]\sim \frac{\text{Tr}(S)}{n},
\end{equation}
which is approximately the average of eigenvalues of the symmetric matrix $S$. Indeed, this estimate is exact when $n=m=1$: One can find the solution to (\ref{eqn:skew}) is of the form (let $v_0= [u_0,w_0]^\top$)
\begin{equation*}
    v(t)= u_0\begin{bmatrix}
            \cos t\\
           -\sin t
        \end{bmatrix}+w_0\begin{bmatrix}
            \sin t\\
            \cos t
        \end{bmatrix}
\end{equation*}
with the periodicity $T_0=2\pi$. Then the convergence rate (\ref{eqn:dec191}) can be computed exactly, assuming $Q$ is a $2\times 2$ diagonal matrix with positive diagonal entries $\{q,r\}$:
\begin{equation*}
\begin{aligned}
    \mu&= \frac{1}{2\pi\|v_0\|^2} \int_{t_0}^{t_0+2\pi} ( qu(s)^2+  r w(s)^2) ds \\
    &= \frac{1}{2\pi\|v_0\|^2} \int_{t_0}^{t_0+2\pi}  \frac{q+r}{2} \|v_0\|^2ds = \frac{q+r}{2}.
    \end{aligned}
\end{equation*}
\section{Proof of Claim \ref{claim:nov5}}\label{sec:appendixb}
The proof is a combination of showing Lemma \ref{lem:appB1} and Lemma \ref{lem:appB2}. Similar to \cite{eberle2016reflection}, we first define following two constants (with $a, b$ defined as in (\ref{eqn:nov5_1}))
\begin{equation}
    \begin{aligned}
        &R_0 := \inf_{t\geq0}\inf \{R\geq 0: \kappa(r,t)\geq 0 ~\forall~ r\geq R\},\\
        &R_1 :=\inf_{t\geq0} \inf\{R\geq R_0: \kappa(r,t)R(R-R_0)\geq 2a/b ~\forall~ r\geq R\}.
    \end{aligned}
\end{equation}
It is straightforward to see that $R_0<R_1$.
\begin{lemma}\label{lem:appB1}
    There exist a constant $c>0$ depending on $a, b, \kappa$ and a strictly increasing concave function $f$ with $f(0)=0, 0\leq f'(r)\leq 1$ so that for all $r>0, t\geq0$,
  \begin{equation}\label{eqn:nov5_1}
        a f''(r)-b \kappa(r,t) r f'(r) \leq -cf(r).
    \end{equation}
\end{lemma}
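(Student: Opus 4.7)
The plan is to follow the two-level concave-function construction from Eberle's reflection coupling paper \cite{eberle2016reflection}, adapted to accommodate the time-dependence in $\kappa(r,t)$. The first reduction is to replace the rate by its time-uniform negative envelope $\tilde\kappa(r) := \max\{0, -\inf_{t \geq 0}\kappa(r,t)\}$, which is continuous, vanishes for $r \geq R_0$, and satisfies $-\kappa(r,t) \leq \tilde\kappa(r)$ pointwise in $r, t$. This reduction allows us to build a single, time-independent $f$ that works uniformly in $t$, without having to track the evolving law $(p_t, q_t)$.

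Next, I would introduce the auxiliary functions
\begin{equation*}
\varphi(r) := \exp\Bigl(-\tfrac{b}{a}\int_0^r s\,\tilde\kappa(s)\,ds\Bigr), \qquad \Phi(r) := \int_0^r \varphi(s)\,ds,
\end{equation*}
so that $a\varphi'(r) + b\,r\,\tilde\kappa(r)\,\varphi(r) = 0$. By construction $\varphi$ is non-increasing and equals the positive constant $\varphi(R_0)$ for $r \geq R_0$. Then, for a constant $c > 0$ to be fixed at the end, set
\begin{equation*}
g(r) := 1 - \tfrac{c}{2a}\int_0^{r\wedge R_1}\Phi(s)/\varphi(s)\,ds, \qquad f(r) := \int_0^r \varphi(s)\,g(s)\,ds.
\end{equation*}
Choosing $c$ small enough that $g(R_1) \geq 1/2$ guarantees $f'(r) = \varphi(r)g(r) \in [0,1]$, $f(0) = 0$, strict monotonicity, and concavity, since $f'' = \varphi' g + \varphi g'$ is a sum of two non-positive terms. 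This handles the qualitative requirements on $f$.

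The core step is the differential inequality, handled in two regimes. For $r \leq R_1$, direct substitution of the defining ODEs for $\varphi$ and $g$ gives
\begin{equation*}
a f''(r) - b\kappa(r,t)\, r f'(r) = -b\,r\tilde\kappa(r)\varphi(r)g(r) - \tfrac{c}{2}\Phi(r) - b\kappa(r,t)\,r\varphi(r)g(r).
\end{equation*}
Applying $-\kappa(r,t) \leq \tilde\kappa(r)$ cancels the first and third terms exactly, leaving $-\tfrac{c}{2}\Phi(r)$, which dominates $-\tfrac{c}{2}f(r)$ since $g \leq 1$ forces $f \leq \Phi$. For $r > R_1$, both $\tilde\kappa$ and $g'$ vanish, so $f''(r) = 0$, and the inequality reduces to $b\kappa(r,t)\,r\,\varphi(R_0)g(R_1) \geq c f(r)$; this follows from the defining property of $R_1$, namely $\kappa(r,t)\,R_1(R_1 - R_0) \geq 2a/b$ for $r \geq R_1$, combined with the linear envelope $f(r) \leq r$.

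The main obstacle is the simultaneous tuning of the single constant $c$ against three competing constraints: (i) $c$ must be small enough that $g(R_1) \geq 1/2$, which imposes $c \lesssim a\bigl/\!\int_0^{R_1}\Phi/\varphi\,ds$; (ii) the $r \leq R_1$ bound uses a factor $c/2$ and must deliver the same $c$ on the right side; and (iii) $c$ must be bounded by $2a\varphi(R_0)g(R_1)/(R_1(R_1-R_0))$ for the outer regime. Taking $c$ to be the minimum of these three thresholds yields the existence claim. The explicit quantitative form advertised in Claim \ref{claim:nov5} is then recovered in the companion Lemma \ref{lem:appB2} by estimating $\int_0^{R_1}\Phi/\varphi\,ds$ via the three natural sub-intervals of $[0, R_1]$ under the hypothesis $R \leq \sqrt{2\pi\beta^{-1}/m}$, which controls the Gaussian-type exponential weights and produces the three additive terms $(e{-}1)R^2/2$, $\sqrt{8/(\beta\kappa)}R\,e^{\pi/4}$, and $4/(\beta\kappa)$.
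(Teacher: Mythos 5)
Your proposal is correct and mirrors the paper's proof essentially step for step: the reduction to the time-uniform envelope $\tilde\kappa$, the weights $\varphi$ and $\Phi$, the ansatz $f'=\varphi g$ with $g$ decreasing from $1$ to at least $1/2$ on $[0,R_1]$, and the inner/outer regime split with the competing constraints on $c$ are all exactly the ingredients the paper uses. The only cosmetic difference is that the paper normalizes $g'$ so that $g(R_1)=1/2$ is saturated and both regimes deliver the same constant $c=\tfrac{a}{2}\big/\int_0^{R_1}\Phi\,\varphi^{-1}\,ds$ (using the concavity bound $\Phi(r)/r\leq\Phi(R_1)/R_1$ in the outer regime where you instead invoke $f(r)\leq r$), whereas you keep $c$ free in the definition of $g$ and absorb the resulting factor of $\tfrac12$ when reading off the final constant.
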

\begin{proof}
    Consider an ansatz for $f'(r)$ is of the form
    \begin{equation}\label{eqn:dec3_4}
        f'(r) = \varphi(r) g(r),
    \end{equation}
    where $1/2\leq g(r)\leq 1$ is a monotonic decreasing continuous function with $g(0)=1$. Let $\Phi(r) = \int_0^r \varphi(s) ds$, then we have the relation 
    \begin{equation}
        \frac{1}{2}\Phi(r)\leq f(r)\leq \Phi(r).
    \end{equation}
    We take $\varphi(r) = \exp(-\frac{b}{a}\int_0^r \tilde{\kappa}(s)sds)$ with $\tilde{\kappa}(r) = \max\{0, -\inf_{t\geq0}\kappa(r,t)\}$, then the ansatz yields
    \begin{equation}
    \begin{aligned}
        a f''(r) &= a \varphi'(r) g(r) + a \varphi(r) g'(r)\\
        &= -b \tilde{\kappa}(r) r \varphi(r) g(r) + a \varphi(r) g'(r) \leq  b \kappa(r,t) r f'(r) + a \varphi(r) g'(r).
         \end{aligned}
    \end{equation}
   We first investigate  the regime $(0,R_1)$.  (\ref{eqn:nov5_1})  will automatically hold from above as long as
    \begin{equation}\label{eqn:nov5_2}
        g'\leq -\frac{c}{a} f/\varphi.
    \end{equation}
If this is true over $(0,R_1)$, then
\begin{equation}
    g(R_1)\leq 1-\frac{c}{a}\int_0^{R_1} f(s)\varphi(s)^{-1}ds\leq 1-\frac{c}{2a}\int_0^{R_1} \Phi(s)\varphi(s)^{-1}ds.
\end{equation}
Because $g\in[1/2,1]$, we need to require that
\begin{equation}\label{eqn:dec3_up}
    c\leq a \Big/ \int_0^{R_1} \Phi(s)\varphi(s)^{-1}ds.
\end{equation}
On the other hand, by choosing 
\begin{equation}
    g'(r) = -\frac{\Phi(r)}{2\varphi(r)} \Big/ \int_0^{R_1} \Phi(s)\varphi(s)^{-1}ds, \quad \text{for}~~ r<R_1,
\end{equation}
(\ref{eqn:nov5_2}) is satisfied with the constant
\begin{equation}\label{eqn:dec3_eq}
    c= \frac{a}{2} \Big/ \int_0^{R_1} \Phi(s)\varphi(s)^{-1}ds.
\end{equation}
What remains is the regime $r\geq R_1$. By the definition of $R_1$, $\varphi(r)=\varphi(R_0)$ for all $r\geq R_0$ and $g(r) = 1/2$ for all $r\geq R_1$, we have that $f'(r) = \varphi(R_0)/2$ for all $r\geq R_1$. Since $\kappa(r,t)R_1(R_1-R_0)\geq 2a/b$ for all $t\geq 0$ and $r\geq R_1$, we get
\begin{equation}
    \begin{aligned}
  a f''(r)-b \kappa(r,t) r f'(r)& = -b \kappa(r,t)\varphi(R_0) r/2\leq -\frac{a\varphi(R_0)}{R_1-R_0} \cdot \frac{r}{R_1}\\
  & \leq -\frac{a\varphi(R_0)}{R_1-R_0} \cdot \frac{\Phi(r)}{\Phi(R_1)}\leq -\frac{a}{2}\Phi(r)\Big/\int_{R_0}^{R_1} \Phi(s)\varphi(s)^{-1}ds\\
  & \leq -\Big(\frac{a}{2}\Big/\int_{0}^{R_1} \Phi(s)\varphi(s)^{-1}ds\Big) f(r) = -c f(r),
    \end{aligned}
\end{equation}
with the same choice of $c$ as in (\ref{eqn:dec3_eq}). We explain that to obtain the above inequalities, we have used $\Phi(r)/r\leq \Phi(R_1)/R_1$ by the concavity of $\Phi$, as well as 
\begin{equation*}
    \begin{aligned}
        \int_{R_0}^{R_1} \Phi(s)\varphi(s)^{-1}ds&= \int_{R_0}^{R_1} \big(\Phi(R_0)+\varphi(R_0)(s-R_0)\big)\varphi(R_0)^{-1}ds\\
        &=\Phi(R_0)\varphi(R_0)^{-1} (R_1-R_0)+(R_1-R_0)^2/2\\
        &\geq \big(\Phi(R_0)+\varphi(R_0)(R_1-R_0)\big)(R_1-R_0)\varphi(R_0)^{-1}/2\\
        &= \Phi(R_1)(R_1-R_0)\varphi(R_0)^{-1}/2.
    \end{aligned}
\end{equation*}
\end{proof}
\begin{lemma}\label{lem:appB2}
We assume that $\inf_{t\geq 0}\kappa(r,t)\geq -m$ for $r\leq R$ and $\inf_{t\geq 0}\kappa(r,t)\geq K$ for $r> R$. If for (\ref{eqn:nov5_1}), $R\leq \sqrt{\frac{a\pi}{2bm}}$, then $c$ can be chosen from
\begin{equation}\label{crange}
\begin{aligned}
  \frac{a}{2}\Big((e-1)R^2/2 + \sqrt{\frac{2a}{bK}}Re^{\pi/4}+ \frac{a}{bK} \Big)^{-1} \leq  c\leq a \Big((e-1)R^2/2 + \sqrt{\frac{2a}{bK}}Re^{\pi/4}+ \frac{a}{bK}\Big)^{-1}.
\end{aligned}
\end{equation}
\end{lemma}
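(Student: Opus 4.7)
\textbf{Proof proposal for Lemma \ref{lem:appB2}.} The strategy is to feed the assumptions on $\kappa(r,t)$ into the construction of Lemma \ref{lem:appB1} and bound the single integral quantity
\begin{equation*}
I \;:=\; \int_0^{R_1} \Phi(s)\,\varphi(s)^{-1}\,ds
\end{equation*}
from above. By (\ref{eqn:dec3_up}) and (\ref{eqn:dec3_eq}), any $c \in [\tfrac{a}{2I_{\mathrm{upper}}},\,\tfrac{a}{I_{\mathrm{upper}}}]$ is admissible, so a good upper bound on $I$ yields precisely the range claimed in (\ref{crange}). Thus the entire argument reduces to estimating $I$ under the hypothesis $R\le\sqrt{a\pi/(2bm)}$.

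The plan is to split $I$ at the transition radius $R$ and handle each piece with its governing assumption. On $[0,R]$, the bound $\tilde\kappa(s)\le m$ gives $\varphi(s)\ge \exp(-bm s^2/(2a))$, and since $\varphi\le 1$ we also have $\Phi(s)\le s$. Hence
\begin{equation*}
\int_0^R \Phi(s)\varphi(s)^{-1}\,ds \;\le\; \int_0^R s\,\exp\!\bigl(b m s^2/(2a)\bigr)\,ds \;=\; \frac{a}{bm}\Bigl(e^{bmR^2/(2a)}-1\Bigr).
\end{equation*}
The hypothesis $R^2\le a\pi/(2bm)$ forces the exponent $x=bmR^2/(2a)\le \pi/4<1$; on $[0,1]$ one has $(e^x-1)/x\le e-1$, so this piece is at most $(e-1)R^2/2$.

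On $[R,R_1]$ the function $\tilde\kappa$ vanishes, hence $\varphi$ is constant equal to $\varphi(R)$ and $\Phi(r) = \Phi(R) + \varphi(R)(r-R)$; consequently $\Phi(r)/\varphi(r) = \Phi(R)/\varphi(R) + (r-R)$. Two ingredients are needed: first, the same exponent bound gives $\varphi(R)\ge e^{-\pi/4}$ and $\Phi(R)\le R$, so $\Phi(R)/\varphi(R)\le R e^{\pi/4}$; second, the defining relation $\kappa(r,t)R_1(R_1-R_0)\ge 2a/b$ combined with $\kappa\ge K$ for $r>R$ and $R_0\le R$ yields $R_1(R_1-R)\le R_1(R_1-R_0)$, from which elementary manipulation of the quadratic $R_1^2 - R_1 R_0 = 2a/(bK)$ produces $R_1-R\le \sqrt{2a/(bK)}$. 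Integrating then gives
\begin{equation*}
\int_R^{R_1}\Phi(r)\varphi(r)^{-1}\,dr \;\le\; (R_1-R)\,R\,e^{\pi/4} + \tfrac{1}{2}(R_1-R)^2 \;\le\; \sqrt{\tfrac{2a}{bK}}\,R\,e^{\pi/4} + \frac{a}{bK}.
\end{equation*}

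Summing the two pieces delivers $I\le (e-1)R^2/2 + \sqrt{2a/(bK)}\,R\,e^{\pi/4} + a/(bK)$, which inserted into (\ref{eqn:dec3_up})--(\ref{eqn:dec3_eq}) gives exactly (\ref{crange}). The main obstacle I anticipate is the second step: carefully justifying $R_1-R\le\sqrt{2a/(bK)}$ given that the hypothesis only controls $\kappa$ uniformly in $t$ on the two separate regions and provides no lower bound on $\kappa$ inside $[R_0,R]$. One must verify that the worst case $R_0=R$ actually gives the largest $R_1$, and that the case $R_1<R$ either cannot occur under the assumptions or yields an even smaller integral; everything else is routine calculus and the exponential estimate $(e^x-1)/x\le e-1$.
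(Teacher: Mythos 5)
Your proposal follows the paper's own argument in substance: both reduce to bounding $I=\int_0^{R_1}\Phi(s)\varphi(s)^{-1}\,ds$, estimate the near-origin piece via $\Phi(s)\varphi(s)^{-1}\le s\,e^{bms^2/(2a)}$ together with $(e^x-1)/x\le e-1$ on $[0,1]$, and control the tail via $R_1-R_0\le\sqrt{2a/(bK)}$ from the quadratic defining $R_1$. The only difference --- splitting at $R$ rather than at $R_0$ --- is immaterial since $\tilde{\kappa}\equiv 0$ on $[R_0,R]$, and your flagged concern about $R_1<R$ is moot because $R_1-R\le R_1-R_0$ always (with $R_0\le R$), so the same quadratic estimate covers it.
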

\begin{proof}
    The construction of $\varphi$ in Lemma \ref{lem:appB1} tells that $\varphi(r)= \varphi(R_0)$ for all $r\geq R_0$, and 
    \begin{equation}
        \Phi(r) = \Phi(R_0) + (r-R_0) \varphi(R_0) \quad \text{for}~~r\geq R_0.
    \end{equation}
    Thus, taking similar calculations as before, we have
    \begin{equation}
        \int_0^{R_1} \Phi(s) \varphi(s)^{-1} ds = \int_0^{R_0} \Phi(s) \varphi(s)^{-1} ds + \Phi(R_0)\varphi(R_0)^{-1}(R_1-R_0) + (R_1-R_0)^2/2.
    \end{equation}
For $r\leq R_0$,
\begin{equation}
\begin{aligned}
    \Phi(r) \varphi(r)^{-1} &= \int_0^r\exp\Big(\frac{b}{a}\int_t^r s\tilde{\kappa}(s)ds\Big)dt  \leq \int_0^r\exp\Big(\frac{bm}{2a}(r^2-t^2)\Big)dt \\
    &\leq \min\Big\{\sqrt{\frac{a\pi}{2bm}}, r\Big\} \exp\Big(\frac{bm r^2}{2a}\Big).
    \end{aligned}
\end{equation}
Therefore, if one only consider the case $R_0< \sqrt{\frac{a\pi}{2bm}}$, using $e^x \leq 1+(e-1)x$ for $x\in[0,1]$, we can get that
\begin{equation}
    \int_0^{R_0} \Phi(s) \varphi(s)^{-1} ds \leq \int_0^{R_0} s \exp\Big(\frac{bms^2}{2a}\Big) ds = \frac{a}{bm}\Big(\exp\big(\frac{bm R_0^2}{2a}\big)-1\Big)\leq (e-1)R_0^2/2.
\end{equation}
By the definition of $R_0, R_1$, we have 
\begin{equation}
    R_1-R_0 \leq  \frac{1}{2}\Big(\sqrt{R_0^2+\frac{8a}{bK}}-R_0\Big)\leq \sqrt{\frac{2a}{b K}}.
\end{equation}
The estimates above, together with $R_0< \sqrt{\frac{a\pi}{2bm}}$, give us the upper bound of the integration as 
\begin{equation}
\begin{aligned}
     \int_0^{R_1} \Phi(s) \varphi(s)^{-1} ds &\leq (e-1)R_0^2/2 + R_0\sqrt{\frac{2a}{b K}}\exp\Big(\frac{bm R_0^2}{2a}\Big)+ \frac{a}{bK}\\
     &\leq (e-1)R^2/2 + \sqrt{2a}Re^{\pi/4}/\sqrt{bK}+ a/(bK), \quad \text{for}~R_0\leq R\leq \sqrt{\frac{a\pi}{2bm}}.
\end{aligned}
\end{equation}
Combined with the equations (\ref{eqn:dec3_up}) and (\ref{eqn:dec3_eq}) in Lemma \ref{lem:appB1}, we thus obtain the range of $c$ (\ref{crange}).
\end{proof}
\bibliographystyle{abbrv} 
\bibliography{references}

\end{document}